\theoremstyle{plain}
\newtheorem{theorem}{Theorem}
\newtheorem{proposition}[theorem]{Proposition}
\newtheorem{corollary}[theorem]{Corollary}
\newtheorem{lemma}[theorem]{Lemma}
\theoremstyle{definition}
\newtheorem{definition}[theorem]{Definition}
\theoremstyle{remark}
\newtheorem{remark}{Remark}
\begin{document}
\title{On the absence of percolation in a line-segment based lilypond model}
\def\A{\mathbb{A}}

\def\Ab{\mathcal{A}b}

\def\absq{{a^{\prime}}^2+{b^\prime}^2}

\def\AP{\text{G}}

\def\app{{a^{\prime\prime}}^2+1}

\def\argmin{\text{argmin}}

\def\arb{arbitrary }

\def\ass{assumption}

\def\arrow{\rightarrow}

\def\codim{\text{codim}}

\def\const{c}

\def\CCG{\text{G}}

\def\colim{\text{colim}}

\def\cond{condition }

\def\C{\mbox{\bf C}}

\def\d{{\rm d}}

\def\dell{\partial}

\def\diam{\text{diam}}

\def\E{\mathbb{E}}

\def\envi{\mathsf{env}}

\def\enviIn{\partial^{\mathsf{in}}}

\def\enviOut{\partial^{\mathsf{out}}}

\def\enviInn{\partial^{\mathsf{in},*}}

\def\enviStab{\mathsf{env}_{\mathsf{stab}}}

\def\Et{\text{Et}}

\def\es{\emptyset}

\def\exp{\text{exp}}

\def\fa{for all }

\def\Fk{\mathcal{F}_{k_0}}

\def\Fm{Furthermore}

\def\G{\mathbb{G}}

\def\gr{\text{gr}}

\def\hge{h_{\mathsf{g}}}

\def\hco{h_{\mathsf{c}}}

\def\H{\text{H}}

\def\Hom{\text{Hom}}

\def\Hs{\widetilde{X}_{H,0}}

\def\inj{\hookrightarrow}

\def\id{\text{id}}

\def\iiets{it is easy to see }

\def\iietc{it is easy to check }

\def\Iietc{It is easy to check }

\def\Iiets{It is easy to see }

\def\imp{\Rightarrow}

\def\({\left(}

\def\){\right)}

\def\[{\left[}

\def\]{\right]}

\def\lver{\left\lvert}

\def\rver{\right\rvert}

\def\lcu{\left\{}

\def\rcu{\right\}}

\def\im{\mbox{im}}

\def\Inv{\text{Inv}}

\def\Ind{\text{Ind}}

\def\Ip{In particular}

\def\ip{in particular }

\def\LB{\text{LB}}

\def\Lo{\mathcal{L}^o}

\def\mc{\mathcal}

\def\mb{\mathbb}

\def\mf{\mathbf}

\def\Hp{\wt{X}_{H,0}^{'}}

\def\M{\mathbb{M}}

\def\Mo{Moreover}

\def\G{\mathbb{G}}

\def\N{\mathbb{N}}

\def\Npo{\mathbf{N}_{\mathcal{P}^o}}

\def\k{\overline{k}}

\def\K{\underline{K}}

\def\ldot{.}

\def\O{\mathcal{O}}

\def\Ob{Observe }

\def\ob{observe }

\def\Otoh{On the other hand}

\def\opartial{\partial^{\text{out}}}

\def\ipartial{\partial^{\text{in}}}

\def\eopartial{\partial^{\text{out}}_{\text{ext}}}

\def\eipartial{\partial^{\text{in}}_{\text{ext}}}

\def\p{\prime}

\def\pp{{\prime\prime}}

\def\Po{\mathcal{P}^0}

\def\P{\mathbb{P}}

\def\Proj{\mbox{\bf P}}

\def\Q{\mathbb{Q}}

\def\QQ{\overline{\Q}}

\def\pr{\text{pr}}

\def\R{\mathbb{R}}

\def\rstab{R_{\mathsf{stab}}}

\def\Spec{\text{Spec}}

\def\st{such that }

\def\sl{sufficiently large }

\def\ss{sufficiently small }

\def\sot{so that }

\def\su{suppose }

\def\Su{Suppose }

\def\suf{sufficiently }

\def\udot{\mathaccent\cdot\cup}

\def\Set{\mathcal{S}et}

\def\Twh{Then we have }

\def\Tes{There exists }

\def\te{there exist }

\def\tes{there exists }

\def\tptc{this proves the claim}

\def\Map{\text{Map}}

\def\VLo{\mc{VL}^o}

\def\wt{\widetilde}

\def\Wcon{We conclude }

\def\wcon{we conclude }

\def\wc{we compute }

\def\Wc{We compute }
\def\wo{we obtain }
\def\wh{we have }
\def\Wh{We have }
\def\Z{\mathbb{Z}}
\def\ZSlab{\mathbb{Z}^2_L\times\{0\}^{d-2}}


\author{Christian Hirsch}
\thanks{Institute of Stochastics, Ulm University, 89069 Ulm, Germany; E-mail: {\tt christian.hirsch@uni-ulm.de}.}

\begin{abstract}
We prove the absence of percolation in a directed Poisson-based random geometric graph with out-degree $1$. This graph is an anisotropic variant of a line-segment based lilypond model obtained from an asymmetric growth protocol, which has been proposed by Daley and Last. In order to exclude backward percolation, one may proceed as in the lilypond model of growing disks and apply the mass-transport principle. Concerning the proof of the absence of forward percolation, we present a novel argument that is based on the method of sprinkling.
\end{abstract}

\keywords{{lilypond model},
{mass-transport principle},
{percolation},
{random geometric graph},
sprinkling}
\subjclass[2010]{Primary 60K35; Secondary 82B43}

\maketitle

\section{Introduction}
\label{intSec}
The classical lilypond model describes a hard-sphere system defined by the following growth-stopping protocol. Start with a planar homogeneous Poisson point process $X$ whose atoms serve as germs of a growth process. At time $0$ and with the same speed at each element $x\in X$ a spherical grain begins to grow. As soon as one such grain touches another both cease to grow. This model and various generalizations have been intensively studied for almost $20$ years, \sot today an entire family of results concerning existence, uniqueness, stabilization and absence of percolation is known. We refer the reader to the original articles \cite{lilypond,lilypond2,lilypond5,nnhs,lilypond3,lilypond4} for details. The purpose of the present paper is to further advance the completion of this picture by adding a result on the absence of percolation in a lilypond model based on an asymmetric growth-stopping protocol. To be more precise, we consider a model where from each atom of a planar Poisson point process a line segment starts to grow in one of the directions $\pm e_1=(\pm1,0)$, $\pm e_2=(0,\pm1)$ and as soon as a line segment touches an already existing one, the former ceases to grow. This model is an anisotropic variant of one of the two line-segment based lilypond models introduced in~\cite{lilypond6}. A realization of the anisotropic lilypond model is shown in Figure~\ref{sfbmFig}.

The lilypond model gives rise to a directed graph on $X$, where an edge is drawn from $x$ to $y$ if the growth of the line segment at $x$ is stopped by the line segment at $y$. We prove that with probability $1$, this graph exhibits neither forward nor backward percolation, thus verifying~\cite[Conjecture 7.1]{lilypond6} in an anisotropic, one-sided setting. The investigation of the absence of percolation in lilypond-type models has been initiated in~\cite{nnhs} and we briefly review the main idea to establish the absence of percolation in models using spherical grains. After that, we explain which parts of the proof have to be modified in the line-segment setting.

In lilypond models based on spherical grains, the notion of doublets plays a crucial role, where a \emph{doublet} consists of a pair of disk-shaped grains $B_1,B_2\subset\R^2$ \st $B_1$ stops the growth of $B_2$ and $B_2$ stops the growth of $B_1$. This notion allows to subdivide the proof for the absence of percolation provided in~\cite{lilypond} into two steps. In the first step it is shown that a.s. each connected component of the lilypond model contains at most one doublet. In the second step, the a.s. absence of descending chains for homogeneous Poisson point processes is used to show that every connected component also contains at least one doublet. \Ip, by mapping each connected component to the midpoint of the two doublet centers we are able construct a locally finite set from the family of connected components in a translation-covariant way. Therefore, an application of the mass-transport principle implies the absence of infinite connected components.

\begin{figure}[!htpb]
\centering
 {\includegraphics[width=7.0cm]{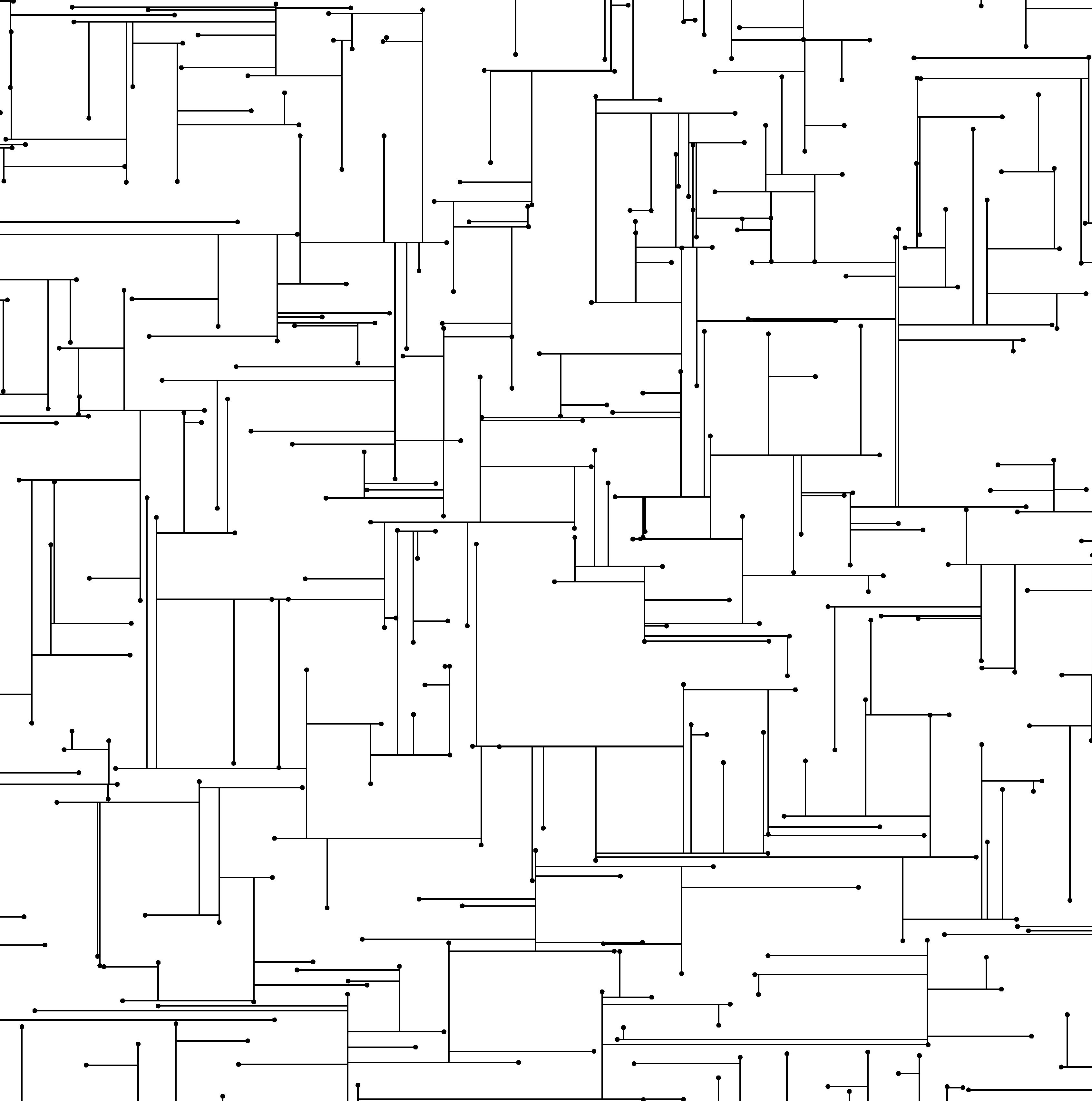}}
\caption{Realization of the lilypond line-segment model (cutout)}\label{sfbmFig}
\end{figure}

In our setting, the notion of doublets is replaced by cycles, where a \emph{cycle} consists of a sequence of line segments $L_1,L_2,\ldots,L_k\subset\R^2$ \st  each $L_{i+1}$ ($i=1,\ldots,k-1$) stops the growth of $L_i$ and $L_1$ stops the growth of $L_k$. As in the setting of spherical grains, it is clear that any connected component contains at most one cycle. \Fm, another application of the mass-transport principle proves the absence of infinite connected components containing a cycle. \Otoh, to show that every connected component contains at least one cycle, we use the \emph{sprinkling} technique developed in~\cite{sprinkling}. In other words, we first express the planar homogeneous Poisson point process $X$ as superposition of two independent homogeneous Poisson point processes $X=X^{(1)}\cup X^{(2)}$, where the intensity of $X^{(1)}$ is only slightly smaller than the intensity of $X$. When considering the lilypond model on $X^{(1)}$, this graph could contain connected components without a cycle, a priori. The idea of the proof is to show that sprinkling the remaining centers $X^{(2)}$ has the effect of stopping every semi-infinite directed path in the lilypond model based on the point process $X^{(1)}$ and that if the sprinkling intensity is chosen sufficiently small, then no additional semi-infinite paths appear. One key step in the formalization of this idea is to combine a stabilization result for the lilypond model at hand with a standard result on dependent percolation~\cite{domProd} to ensure that, except for small exceptional islands in the plane, one has good control on the effects of the sprinkling. 

Our method uses only rather general properties of the line-segment based lilypond model and it might be useful to prove the absence of percolation in further directed Poisson-based random geometric graphs with out-degree $1$. Indeed, the sprinkling technique applies if the underlying graph satisfies a suitable shielding condition and there is a positive probability of modifying the graph locally inside large square so that any path entering the square is stopped. 

The paper is organized as follows. In Section~\ref{defSec}, we provide a precise description of the lilypond model under consideration and state the main result of this paper, Theorem~\ref{mainProp}, which deals with the absence of percolation. In Section~\ref{bPercSec}, we explain how the mass-transport principle can be used to deduce the absence of backward percolation from the absence of forward percolation, and we also state several auxiliary results, which are important in the proof of the absence of forward percolation. Assuming these auxiliary results, in Section~\ref{ideaSec}, we prove the absence of forward percolation using the sprinkling technique. Section~\ref{lilySec} is devoted to the proof of the auxiliary results. Finally, in Section~\ref{extSec}, we discuss possible extensions of the sprinkling technique to other directed Poisson-based random geometric graphs of outdegree at most $1$.

\section{Model definition and statement of main result}
\label{defSec}
The purpose of this section is two-fold. First, we provide a formal definition of the line-segment based asymmetric lilypond model which shall be the topic of our considerations. Second, we state the main result of the present paper, Theorem~\ref{mainProp}.

In the lilypond model under consideration, at time $0$ from every point of an independently marked homogeneous planar Poisson point process $X$ with intensity $1$, a line segment starts to grow in one of the four directions $\M=\lcu\pm e_1,\pm e_2\rcu$ (which is chosen uniformly at random). It stops growing as soon as it hits another line segment. The growth-stopping protocol is asymmetric in the sense that in contrast to the hitting line segment, the segment being hit does \emph{not} stop growing (provided of course that its growth had not already stopped before the collision). Although the growth dynamics of this lilypond model admits a very intuitive description, providing a rigorous mathematical definition is not entirely trivial. Nevertheless, by now this problem has been investigated for many variants of the classical lilypond model from~\cite{nnhs} and existence as well as uniqueness are guaranteed if the underlying point process does not admit a suitable form of descending chains. These chains are usually easy to exclude for independently marked homogeneous Poisson point processes, see e.g.~\cite{lilypond6,lilypond,lilypond5}. For our purposes the correct variant is the following. 
\begin{definition}
\label{aniDescChainDef}
Let $b>0$ and $\varphi\subset\R^{2}$ be locally finite. A (finite or infinite) set $\{\xi_i\}_{i\ge1}\subset\varphi$ is said to form a \emph{$b$-bounded anisotropic descending chain} if
 $\left|\xi_1-\xi_2\right|_\infty\le b$ and
 $\left|\xi_i-\xi_{i+1}\right|_\infty<\left|\xi_{i-1}-\xi_i\right|_\infty$ \fa $i\ge2$.
A set $\{\xi_i\}_{i\ge1}\subset\varphi$ is said to define an \emph{anisotropic descending chain} if it forms a \emph{$b$-bounded anisotropic descending chain} for some $b>0$.
\end{definition}
\Ip, one can derive the following result whose proof is obtained by a straightforward adaptation of the arguments in~\cite{lilypond6}, where we write $\N_\M$ for the family of all locally finite subsets of $\R^{2,\M}=\R^2\times\M$. 
\begin{proposition}
\label{lilyPreDef}
Let $\varphi\in\N_\M$ be an $\M$-marked locally finite set that does not contain anisotropic descending chains and \st $(\xi-\eta)/\lver \xi-\eta \rver\not\in\lcu\pm e_1,\pm e_2,(\pm e_1\pm e_2)/\sqrt{2}\rcu $ \fa $x=(\xi,v),y=(\eta,w)\in\varphi$ with $x\ne y$. Then, \tes a unique function $f:\varphi\to [0,\infty]$ with the following properties.
\begin{enumerate}
\item $[\xi,\xi+f(x)v)\cap [\eta,\eta+f(y)w)=\es$ \fa $x=(\xi,v),y=(\eta,w)\in\varphi$ with $x\ne y$ (hard-core property), and 
\item for every $x\in\varphi$ with $f(x)<\infty$ \tes a unique $y=(\eta,w)\in\varphi$ \st $\xi+f(x)v\in [\eta,\eta+f(y)w)$ and $\lver \xi+f(x)v-\eta\rver<f(x)$ (existence of stopping neighbors).
\end{enumerate}
\end{proposition}

In the following, we denote by $\N^\p$ the family of all non-empty $\varphi\in\N_\M$ \st $f(x)<\infty$ \fa $x\in\varphi$, \st $(\xi-\eta)/\lver \xi-\eta \rver\not\in\lcu\pm e_1,\pm e_2,(\pm e_1\pm e_2)/\sqrt{2}\rcu $ \fa $x=(\xi,v),y=(\eta,w)\in\varphi$ with $x\ne y$, and \st $\varphi$ does not contain anisotropic descending chains. \Fm, it will be convenient to introduce functions $\hco:\N^\p\times \R^{2,\M}\to \R^{2,\M}$ and $\hge:\N^\p\times\R^{2,\M}\to\R^{2}$, where $\hco(\varphi,x)$ denotes the uniquely determined stopping neighbor of $x$ (in the sense of point $2.$ in Proposition~\ref{lilyPreDef}), and where $\hge(\varphi,(\xi,v))=\xi+f(x)v$. In other words, $\hco(\varphi,x)$ denotes the element of $\varphi$ stopping the growth of $x$, whereas $\hge(\varphi,x)$ denotes the actual endpoint of the segment emanating from $x$.  Therefore, we call $\hco(\varphi,x)$ the \emph{combinatorial descendant} and $\hge(\varphi,x)$ the \emph{geometric descendant} of $x$. If $x=(\xi,v)\not\in\varphi$, we put $\hco(\varphi,x)=x$ and $\hge (\varphi,x)=\xi$. 

Our results on the absence of percolation can be stated using only the notion of combinatorial descendants. Still, tracing the path described by following iteratively the geometric descendants is in a sense much closer to the geometry of the underlying lilypond model than tracing the path of iterated combinatorial descendants. Thus, it is not surprising that geometric descendants play a crucial role in the analysis of percolation properties. This justifies the introduction of separate notation despite the fact that $\hge(\varphi,x)$ could be easily recovered from $x$, $\varphi$ and $\hco(\varphi,x)$. 

In order to state our main result, it is convenient to introduce for any $x\in X$ the set $\hco^{(\infty)}(X,x)=\big\{ \hco^{(n)}(X,x):n\ge0\big\}$, where we recursively define $\hco^{(0)}(X,x)=x$ and $\hco^{(n)}(X,x)=\hco\big(X,\hco^{(n-1)}(X,x)\big)$, $n\ge1$.
\begin{theorem}
\label{mainProp}
With probability $1$, the lilypond line-segment model does not percolate, i.e.,
\begin{enumerate}
\item for every $x\in X$ the set $\hco^{(\infty)}(X,x)$ is finite, and\label{part1}
\item for every $x\in X$ there exist only finitely many $y\in X$ with $x\in \hco^{(\infty)}(X,y)$.\label{part2}
\end{enumerate}
\end{theorem}

\section{Absence of backward percolation and statement auxiliary results}
\label{bPercSec}
The goal of the present section is two-fold. First, we show how the absence of backward percolation (part 2. of Theorem~\ref{mainProp}) can be derived from the absence of forward percolation (part 1. of Theorem~\ref{mainProp}) using the mass-transport principle. Second, we highlight three important properties of the lilypond model, which will be verified in Section~\ref{lilySec}. The benefit of introducing these properties in the present section is that these are the main properties of the lilypond model that will be used in the proof for the absence of forward percolation in Section~\ref{ideaSec}. In this way, we separate the presentation of the sprinkling method from the rather technical verification of the three properties.

In the following, for $r>0$ and $\xi\in\R^2$, we denote by $Q_r(\xi)=[-r/2,r/2]^2+\xi$ the square of side length $r$ centered at $\xi$. We also put $Q_r^\M(\xi)=Q_r(\xi)\times\M$.
To begin with, we deduce the absence of backward percolation from the absence of forward percolation.
\begin{proof}[Proof of Theorem~{\hyperref[part2]{\ref*{mainProp}.\ref*{part2}}} assuming Theorem~{\hyperref[part2]{\ref*{mainProp}.\ref*{part1}}}]
Similar to the arguments used in~\cite{lilypond,lilypond5}, we use the mass-transport principle. Loosely speaking, to define a translation-covariant mass transport, we first note that from the absence of forward percolation, we deduce that starting from any point of the Poisson point process and taking iterated combinatorial descendants we arrive at a cycle. Transporting one unit of mass from that point to the center of gravity of the cycle, we see that choosing a discretization of the Euclidean space into squares, the expected total outgoing mass from any square is finite, whereas the occurrence of backward percolation would result in some square receiving an infinite amount of mass.

To be more precise, for every $x\in X$ we denote by $V(x)$ the set of all $y\in X$ \st $\hco^{(n)}(\varphi,x)=y$ for infinitely many $n\ge1$ and by $C(x)$ the center of gravity of the spatial coordinates in $V(x)$. Since Theorem~{\hyperref[part2]{\ref*{mainProp}.\ref*{part1}}} implies that $V(x)$ is finite, this point is well-defined. Next, we introduce a function $\psi:\Z^2\times\Z^2\to [0,\infty)$ by putting
$$\psi(z_1,z_2)=\#\{x\in X\cap Q^\M_1(z_1):C(x)\in Q_1(z_2)\},$$
\sot $\psi(z_1,z_2)$ denotes the number of elements of $x\in X\cap Q^\M_1(z_1)$ \st $C(x)$ is contained in $Q_1(z_2)$. Note that if $x\in X$ is \st $C(x)\in Q_1(o)$ and there exist infinitely $y\in X$ with $x\in\hco^{(\infty)}(X,y)$, then $\sum_{z\in\Z^2}\psi(z,o)=\infty$. \Fm, for any $z\in\Z^d$ the random variables $\psi(z,o)$ and $\psi(o,-z)$ have the same distribution, \sot the assumption from Section~\ref{defSec} that $X$ is a homogeneous Poisson point process with intensity $1$ yields
\begin{align*}
\E\sum_{z\in\Z^2}\psi(z,o)=\sum_{z\in\Z^2}\E\psi(z,o)=\sum_{z\in\Z^2}\E\psi(o,-z)=\E\sum_{z\in\Z^2}\psi(o,-z)=\E\#(X\cap Q^\M_1(o))=1.
\end{align*}
\Ip, $\sum_{z\in\Z^2}\psi(z,o)$ is a.s. finite, \sot with probability $1$ there does not exist $x\in X$ \st $C(x)\in Q_1(o)$ and \st $x\in\hco^{(\infty)}(X,y)$ for infinitely many $y\in X$. Using stationarity once more completes the proof of Theorem~{\hyperref[part2]{\ref*{mainProp}.\ref*{part2}}}.
\end{proof}

In the proof of the absence of forward percolation the sprinkling method~\cite{sprinkling} is used. In the first step, we form the graph based on all but a tiny fraction of $X$, while in the second step the remaining points of $X$ are added independently in order to stop any of the possibly existing infinite paths.
In order to turn this rough description into a rigorous proof, we make use of three important properties of the lilypond model. In the present section, we state these properties and provide explanations and illustrations in order to make the reader familiar with them. Next, in Section~\ref{ideaSec}, we provide a proof for the absence of forward percolation based on these properties. Finally, in Section~\ref{lilySec}, we verify these properties for the specific lilypond model under consideration. We present the three properties in order of increasing complexity.

First, we note that the combinatorial descendant function $\hco$ satisfies a continuity property in the sense that if $\varphi\in\N^\p$ and $\varphi_1\subset\varphi_2\subset\cdots$ is an increasing family of elements of $\N^\p$ with $\bigcup_{n\ge1}\varphi_n=\varphi$, then for every $x\in\varphi$ the combinatorial descendant of $x$ in $\varphi$ agrees with combinatorial descendant of $x$ in $\varphi_i$, \fa \suf large $i\ge1$.
\begin{proposition}
\label{contProp}
The considered lilypond line-segment model satisfies the continuity property. 
\end{proposition}
Section~\ref{contSec} is devoted to the proof of this proposition. Next, when considering the process of passing iteratively to combinatorial descendants, we need some control of distances between the corresponding geometric descendants. To be more precise, we consider a discretization of the Euclidean plane into large squares and call some of these squares good. Loosely speaking, if we start from any finite family of squares with the property that all adjacent squares are good, then these good squares should act as a shield: if starting from some point whose geometric descendant lies in the initial finite family of cubes, then the following geometric descendant is located either also in a square of that family or in an adjacent one. To be more precise, we say that the lilypond model satisfies the shielding condition (SH) if there exists a family of events $(A_s)_{s\ge1}$ on $\N_\M$ with $\lim_{s\to\infty}\P(X^{(1)}\cap Q_{3s}^\M(o) \in A_s)=1$ and \st the following condition is satisfied, where we write $B_1\oplus B_2=\{b_1+b_2:b_1\in B_1,\,b_2\in B_2\}$ for the Minkowski sum of $B_1,B_2\subset\R^2$.
\begin{enumerate}
\item[(SH)]
Consider the lattice $\Z^2$ with edges given by $\lcu \{ z_1,z_2\}:\lver z_1-z_2\rver_\infty\le1\rcu$, let $B\subset \Z^2$ and denote by $B^\p=\{z\in\Z^2\setminus B:|z-z^\p|_\infty= 1 \text{ for some }z^\p\in B\}$ the outer boundary of $B$. If $\varphi\in \N^\p$ is \st $(\varphi-sz^\p)\cap Q_{3s}^\M(o)\in A_s$ \fa $z^\p\in B^\p$, then 
$$\hge(\varphi,\hco(\varphi,x))\in sB\oplus Q_{3s}(o)$$
 \fa $x=(\xi,m)\in\varphi$ with $\hge(\varphi,x)\in sB\oplus Q_s(o)$.
\end{enumerate}
A site $z\in\Z^2$ with $(X^{(1)}-sz)\cap Q^\M_{3s}(o)\in A_s$, is called \emph{$s$-good}. See Figure~\ref{shFig} for an illustration of the shielding condition (SH). In Section~\ref{lilySec}, we verify that this condition is satisfied in the present setting.
\begin{proposition}
\label{shieldCond}
The considered lilypond line-segment model satisfies condition \emph{(SH)}.
\end{proposition}

\begin{figure}[!htpb]
     \centering
\begin{tikzpicture}[font=\footnotesize]
\fill[black!40!white] (2,2) rectangle (6,4);
\draw[step=2cm,gray,dashed] (0,0) grid (8,6);

\fill[black] (2.4,0.4) circle (1pt);
\draw[->] (2.4,0.4)--(2.4,2.6);
\draw[->] (1.8,2.6)--(6.2,2.6);
\fill[black] (1.8,2.6) circle (1pt);

\fill[black] (6.2,2.1) circle (1pt);
\fill[black] (6.1,3.2) circle (1pt);
\draw[->] (6.2,2.1)--(6.2,3.2);
\draw[->] (6.1,3.2)--(8.4,3.2);

\coordinate[label=180:${x}$] (u) at (2.4,0.4);
\coordinate[label=180:${\hge(\varphi\text{,}x)}$] (u2) at (3.2,2.8);
\coordinate[label=0:{${\hge(\varphi\text{,}\hco(\varphi\text{,}x))}$}] (u3) at (6.1,2.6);
\coordinate[label=180:${\hco(\varphi\text{,}x)}$] (u4) at (1.8,2.6);
\end{tikzpicture}
           \caption{Possible configuration as in condition (SH); set $sB\oplus Q_s(o)$ in gray}
           \label{shFig}
\end{figure}
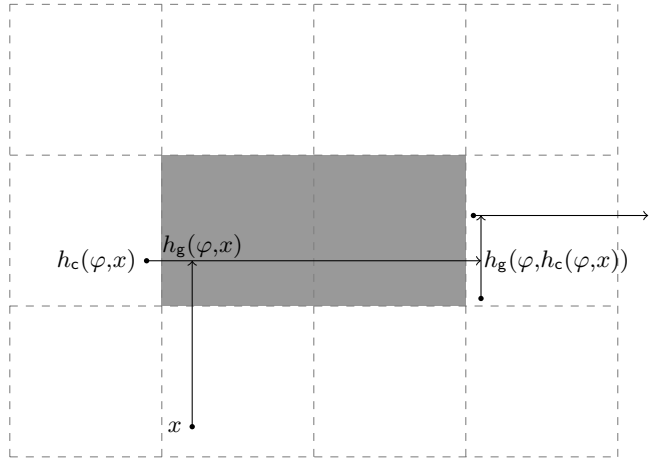

Finally, we need to know that in the lilypond model under consideration, sprinkled germs can be used to stop already existing segments from growing. To explain this property in greater detail, we first introduce the precise form of sprinkling that will be use in the following. For every $s>1$ the Poisson point process $X$ can be represented as $X=X^{(1)}(s)\cup X^{(2)}(s)$, where $X^{(1)}(s)$ is independent of $X^{(2)}(s)$ and both point processes are independently $\M$-marked homogeneous Poisson point processes with intensities $1-s^{-3}$ and $s^{-3}$, respectively. \Ip, $\lim_{s\to\infty}\P(X^{(2)}(s)\cap Q^\M_s(o)=\es)=1$.
Usually, the value of $s$ is clear from the context and then we write $X^{(i)}$ instead of $X^{(i)}(s)$. 

Having introduced the sprinkling, we now discuss a third important property of the lilypond model, which will be called \emph{uniform stopping property}. We assume that there exists a family of positive real numbers $(p_s)_{s\ge1}$ (possibly tending to $0$ as $s\to\infty$) with the following property. Ideally, we would like to achieve that, conditioned on $X^{(1)}\cap Q^\M_{3s}(o)$, with a probability at least $p_s$ adding the sprinkling $X^{(2)}\cap Q^\M_s(o)$ will cause all segments entering $Q_s(o)$ to become stuck in a cycle in $Q_s(o)$, whereas the structure of the lilypond model outside $Q_s(o)$ is left largely unchanged. However, this goal is too ambitious. Indeed, in some pathological cases, we can encounter realizations of $X^{(1)}\cap Q^\M_{3s}(o)$ for which the probability of observing a suitable sprinkling is much lower than $p_s$. Still, to prove the absence of forward percolation, it suffices to impose that the probability of such pathological configurations tends to $0$ as $s\to\infty$.
In order to state this property precisely, for $s>0$, $\varphi\in\N^\p$ and $z\in\Z^2$ it is convenient to denote by
$$\enviIn_{z,s}\(\varphi\)=\big\{ x\in\varphi\setminus Q^\M_s(o):\hge(\varphi,x)\in Q_s(sz)\big\}$$
the subset of all points $x\in\varphi\setminus Q^\M_s(o)$ whose geometric descendant is contained in $Q_s(sz)$. 

Now, we say that the lilypond satisfies the uniform stopping condition (condition (US)) if \te a family of positive real numbers $(p_s)_{s\ge1}$, $p_s\in(0,1]$ and a family of events $(A^\p_s)_{s\ge1}$ on $\N^\p\times \N^\p$ \st $A^\p_s\subset A_s\times\N^\p$, 
\begin{align}
\label{posChanceLem}
\P\(\big(X^{(1)}\cap Q^\M_{3s}(o),X^{(2)}\cap Q^\M_s(o)\big)\in A_s^\p\mid X^{(1)}\cap Q^\M_{3s}(o)\)\ge p_s 1_{X^{(1)}\cap Q^\M_{3s}(o)\in A_s}\text{ a.s.},
\end{align}
 and \st the following condition is satisfied.
\begin{enumerate}
\item[(US)]
Let $\varphi_1,\varphi_2\in \N^\p$ be \st $\varphi_2\subset Q^\M_s(o)$ and $(\varphi_1\cap Q_{3s}^\M(o),\varphi_2)\in A^\p_s$. Moreover, let $\psi\subset\R^{2,\M}\setminus Q^{\M}_{s}(o)$ be a finite set \st for every $z\in\Z^2$ either $\psi\cap Q^\M_{s}(sz)=\es$ or $\((\varphi_1-sz)\cap Q^\M_{3s}(o),(\psi-sz)\cap Q^\M_{s}(o)\)\in A^\p_s$. If $\varphi_1\cup\psi^\p\in\N^\p$ \fa $\psi^\p\subset\varphi_2\cup\psi$, then the following stabilization properties are true.
\begin{enumerate}
\item If $x\in \varphi_2$, then $\hco(\varphi_{1}\cup\varphi_2\cup\psi,x)=\hco(\varphi_2,x).$
\item If $x\in \varphi_1$, then either $\hco(\varphi_1\cup\varphi_2\cup\psi,x)\in\varphi_2$ or
\par\hspace*{-\leftmargin}\parbox{\textwidth}{$$\hco\(\varphi_1\cup\varphi_2\cup\psi,x\)=\hco\(\varphi_1\cup\psi,x\)\text{ and }x\not\in \enviIn_o\(\varphi_1\cup\psi\).$$}
\end{enumerate}
\end{enumerate}
If a site $z\in\Z^2$ is \st $\((X^{(1)}-sz)\cap Q^\M_{3s}(o),(X^{(2)}-sz)\cap Q^\M_{s}(o)\)\in A^\p_s$, then we also say that the site $z$ (or the sprinkling at this site) is \emph{$s$-perfect}. See Figure~\ref{uStopFig} for an illustration of the uniform stopping condition. Again, the verification of condition (US) is postponed to Section~\ref{lilySec}.
\begin{proposition}
\label{stopCond}
The considered lilypond line-segment model satisfies condition \emph{(US)}.
\end{proposition}

\begin{figure}[!htpb]
     \centering
     \begin{subfigure}[b]{0.45\textwidth}
           \centering
\begin{tikzpicture}
\draw (0,0) rectangle (5,5);
\draw[->] (3,6)--(3,1);
\draw[->] (2,1)--(4,1);
\draw[->] (4,0.5)--(4,2);
\draw[->] (3.5,2)--(5.5,2);
\end{tikzpicture}

           \caption{Configuration before addition of $\varphi_2$}
           \label{uStopFig1}
     \end{subfigure}%
     ~ 
     \begin{subfigure}[b]{0.45\textwidth}
           \centering
\begin{tikzpicture}
\draw (0,0) rectangle (5,5);

\draw[->] (3,6)--(3,3.55);
\draw[->] (2,1)--(4,1);
\draw[->] (4,0.5)--(4,2);
\draw[->] (3.5,2)--(5.5,2);

\draw[red,->] (2.7,3.55)--(3.35,3.55);
\draw[red,->] (3.35,3.7)--(3.35,3.0);
\draw[red,->] (3.6,3.0)--(2.85,3.0);
\draw[red,->] (2.85,2.9)--(2.85,3.55);

\end{tikzpicture}

           \caption{Configuration after addition of $\varphi_2$ (red)}
           \label{uStopFig2}
     \end{subfigure}
\caption{Possible configurations as in condition (US); $\psi=\es$}
\label{uStopFig}
\end{figure}
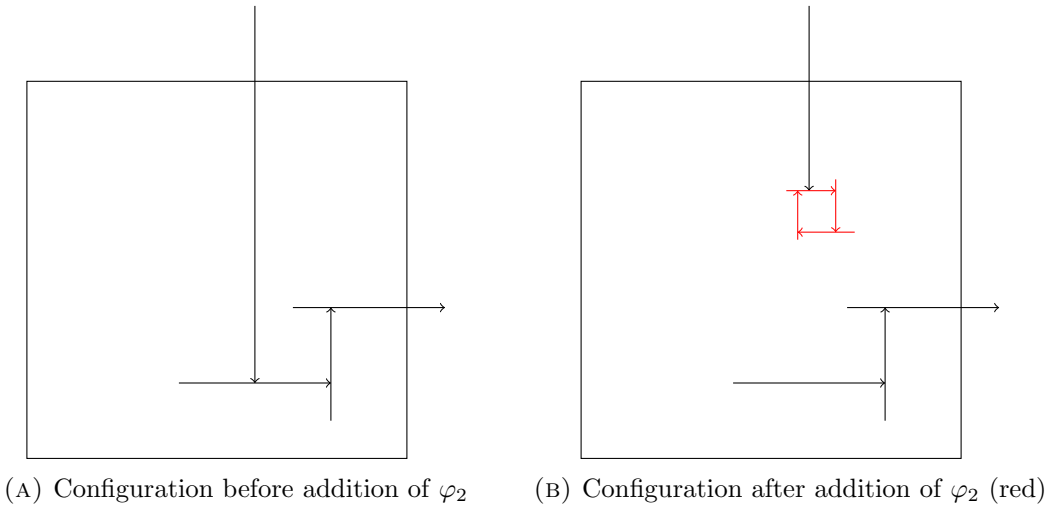

\begin{remark}
Properties (a) and (b) seem complicated at first sight, but allow for a simple heuristic description. 

Property (a) can be rephrased as stating that the lilypond model on $\varphi_2$ is autonomous in the sense that changes outside of $Q^\M_s(o)$ cannot alter this sub-configuration. To be more precise, as $\hco(\varphi_{1}\cup\varphi_2\cup\psi,x)=\hco(\varphi_2,x)$ and as $\varphi_2$ is assumed to be contained in $Q^\M_s(o)$, we conclude that $\hco(\varphi_2,x)\in Q^\M_s(o)$, \sot the iterates of $x\in\varphi_2$ stay in $Q^\M_s(o)$. It is also useful to note that from $x\in\varphi_2$ and $\hco(\varphi_2,x)\in Q^\M_s(o)$ we can deduce that $\hge(\varphi_2,x)\in Q_s(o)$.  

Property (b) yields the existence of suitable configurations \st any line segment that enters $Q_s(o)$ has a descendant in the sprinkled set (and therefore stops inside this square), whereas for any other line segment the addition of the sprinkled germs does not change the descendant. Also note that in the first case of part (b) knowing that $\hge(\varphi_{1}\cup\varphi_2\cup\psi,x)\in Q_s(o)$ for all $x\in\varphi_2$ allows us to deduce from $\hco\(\varphi_1\cup\varphi_2\cup\psi,x\)\in \varphi_2$ that $\hge\(\varphi_1\cup\varphi_2\cup\psi,x\)\in Q_s(o)$.
\end{remark}
\begin{remark}
Of course, the strong degree of internal stability that is required in properties (a) and (b)  occurs rather rarely, but condition (US) only requires that for most configurations induced by $X^{(1)}$ it occurs with a positive probability that is bounded away from $0$. 
\end{remark}

A rough sketch of the proof of Theorem~{\hyperref[part2]{\ref*{mainProp}.\ref*{part1}}} goes as follows. We start by considering the lilypond model on $X^{(1)}$. For large $s$, all but a sub-critical set of sites are $s$-good and therefore the configuration in the corresponding squares will only be influenced by sprinkling close to these squares. First, we add those sprinkled points whose effects we cannot control well in the sense that  $A^\p_s$ is not satisfied. This will increase slightly the sub-critical clusters formed by those squares for which we only have little information about the behavior of the lilypond model. However, since the sprinkling is of very low intensity, these enlarged clusters are still sub-critical. So far, we have held back the sprinkling inside the squares satisfying $A^\p_s$ and due to their special nature we can precisely control the effects of adding them to the system. \Ip, any purported infinite path must also be infinite before adding the final sprinkling. However, a path in the lattice that is killed with probability bounded away from $0$  each time it hits a site in the super-critical cluster, will be killed eventually. 
The preceding argument is made rigorous in Section~\ref{ideaSec}. Moreover, it can also be used to see that the number iterations until a cycle is reached exhibits at least exponentially decreasing tail probabilities.

\section{Absence of forward percolation}
\label{ideaSec}
In this section, we provide the details for the proof of Theorem~{\hyperref[part2]{\ref*{mainProp}.\ref*{part1}}}, which is based on a sprinkling argument. The main difficulty arises from the observation that the sprinkling has to be analyzed in a rather delicate way because two essential properties must be satisfied. On the one hand, we have to guarantee that if $x\in X^{(1)}$ is an element with $\# \hco^{(\infty)}({X^{(1)}},x)=\infty$, then after the sprinkling we must have $\# \hco^{(\infty)}({X},x)<\infty$. On the other hand, the sprinkling should not influence the lilypond model too strongly, since \fa $x\in X^{(1)}$ with $\# \hco^{(\infty)}({X^{(1)}},x)<\infty$ it has to be ensured that, after adding the sprinkling, the set of descendants $\hco^{(\infty)}({X},x)$ is still finite. As indicated above, we solve this problem by adding the sprinkling in two steps. First, we construct a point process $X^{(3)}\subset\R^{2,\M}$ with $X^{(1)}\subset X^{(3)}\subset X$ by adding only those germs of $X^{(2)}$ for which we have little knowledge as of how their addition would affect the existing directed graph. In a second step we add the remaining germs of $X^{(2)}$ for which we have precise information about their impact on the already existing model. 

To construct $X^{(3)}$, we introduce a discrete site process that allows us to determine whether there is either
\begin{enumerate}
\item no sprinkling inside the corresponding square, or
\item an $s$-perfect sprinkling, or
\item some other sprinkling.
\end{enumerate}
To be more precise, we define a $\{0,1,2\}$-valued site process $\lcu Y_z\rcu_{z\in\Z^2}$ as follows. If $X^{(2)}\cap Q^\M_s(sz)=\es$ and $z$ is $s$-good, then $Y_z=0$. Next, $Y_z=1$ if $z$ is $s$-perfect, i.e., if
$$\big((X^{(1)}-sz)\cap Q_{3s}^\M(o),(X^{(2)}-sz)\cap Q^\M_s(o)\big)\in A^{\p}_s.$$ 
If neither $Y_z=0$ nor $Y_z=1$, then $Y_z=2$. Since we assumed that $\N^\p$ does not contain the empty configuration, $z$ being $s$-perfect implies that $X^{(2)}\cap Q^\M_s(sz)\ne\es$, \sot there is no ambiguity in the definition of $Y$. Also note that conditioned on $X^{(1)}$ the site process $\{ Y_z\}_{z\in\Z^2}$ is an inhomogeneous independent site process.

In the next step we identify a large set of sites for which we have good control over the effect of the sprinkling. We recursively construct sets of revealed sites $(R^{(i)})_{i\ge0}$, $R^{(i)}\subset{\Z^2}$ and bad sites $(B^{(i)})_{i\ge0}$, $B^{(i)}\subset{\Z^2}$ as follows. Initially, put $R^{(0)}=B^{(0)}=\{ z\in\Z^2:Y_z=2\}$. Now \su $i\ge0$ and that $R^{(i)}\subset\Z^2$ as well as $B^{(i)}\subset\Z^2$ have already been constructed. For $z\in\Z^2$ write $S(z)=\{z^\p\in\Z^2: \left|z-z^\p\right|_\infty\le1\}$. Choose the closest bad site $z\in\Z^2$ to the origin with the property that its neighborhood $S(z)$ is not already completely revealed, i.e., $z\in B^{(i)}$ but $S(z)\not\subset R^{(i)}$. If several sites have this property, we choose the lexicographically smallest one. Put $R^{(i+1)}=R^{(i)}\cup S(z)$ and $B^{(i+1)}=B^{(i)}\cup\{z^\p\in S(z): X^{(2)}\cap Q^\M_s(sz^\p)\ne\es\}$. Finally, put $R=\bigcup_{i\ge0}R^{(i)}$ and see Figure~\ref{rConstr} for an illustration of the construction of the set $R$.
\begin{figure}[!htpb]
        \centering
        \begin{subfigure}[t]{0.28\textwidth}
                \centering
\begin{tikzpicture}[scale=0.85]
\draw[step=0.5cm,gray,dashed] (0,0) grid (5,5);
\fill (2.5,2.5) circle  (2pt);
\draw[pattern=crosshatch dots] (2,3) rectangle (2.5,3.5);
\draw[pattern=crosshatch dots] (4,0.5) rectangle (4.5,1);
\end{tikzpicture}

                \caption{Initial set of bad sites}
                \label{rFig1}
        \end{subfigure}%
\qquad
        \begin{subfigure}[t]{0.28\textwidth}
                \centering
\begin{tikzpicture}[scale=0.85]

\draw[step=0.5cm,gray,dashed] (0,0) grid (5,5);
\fill (2.5,2.5) circle  (2pt);
\fill[gray,opacity=0.5] (1.5,2.5) rectangle (3,4);
\draw[pattern=crosshatch dots] (2,3) rectangle (2.5,3.5);
\draw[pattern=crosshatch dots] (1.5,3.5) rectangle (2,4);
\draw[pattern=crosshatch dots] (4,0.5) rectangle (4.5,1);

\end{tikzpicture}

                \caption{New bad site revealed}
                \label{abcabc}
        \end{subfigure}
\qquad
        \begin{subfigure}[t]{0.28\textwidth}
                \centering
\begin{tikzpicture}[scale=0.85]

\draw[step=0.5cm,gray,dashed] (0,0) grid (5,5);
\fill (2.5,2.5) circle  (2pt);
\fill[gray,opacity=0.5] (1.5,2.5) rectangle (3,4);
\fill[gray,opacity=0.5] (1.0,4.0) rectangle (2.5,4.5);
\fill[gray,opacity=0.5] (1.0,4.0) rectangle (1.5,3.0);
\draw[pattern=crosshatch dots] (2,3) rectangle (2.5,3.5);
\draw[pattern=crosshatch dots] (1.5,3.5) rectangle (2,4);
\draw[pattern=crosshatch dots] (4,0.5) rectangle (4.5,1);
\end{tikzpicture}
                \caption{No new bad sites revealed}
                \label{abcabd}
        \end{subfigure}

                \caption{Construction of $R$}
\label{rConstr}
\end{figure}
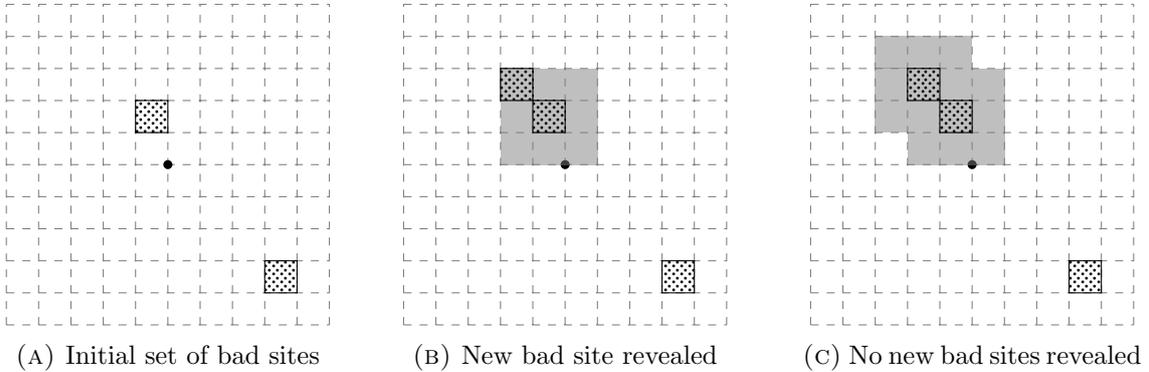

We first note that for \suf large $s\ge1$ only very few sites are revealed at all.
\begin{lemma}
\label{revLem}
There exists $s\ge1$ \st with probability $1$, the revealed sites $R\subset\Z^2$ are dominated from above by a sub-critical Bernoulli site-percolation process.
\end{lemma}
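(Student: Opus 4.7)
The plan is to pointwise dominate $1_R$ by a $\{0,1\}$-valued site process that is $k$-dependent for a fixed constant $k$ and whose marginal probability tends to $0$ as $s\to\infty$; a stochastic domination theorem of Liggett--Schonmann--Stacey type then yields the desired comparison with sub-critical Bernoulli site-percolation on $\Z^d$.

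First, I introduce the auxiliary indicator
$$\wt Y_z\;=\;1_{Y_z=2}\vee 1_{X^{(2)}\cap Q^\M_s(sz)\ne\es},$$
which flags every site that is either initially bad or whose box receives a sprinkled germ. Since $z$ is $s$-good unless $(X^{(1)}-sz)\cap Q^\M_{3s}(o)\notin A_s$, and since $X^{(2)}$ has intensity $s^{-d-1}$ while $|Q_s(o)|=s^d$, a union bound gives
$$\P(\wt Y_z=1)\;\le\;\P\bigl(X^{(1)}\cap Q^\M_{3s}(o)\notin A_s\bigr)+\bigl(1-e^{-s^{-1}}\bigr),$$
which tends to $0$ as $s\to\infty$ by the hypothesis on $A_s$.

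Second, I verify that $R$ sits inside a local thickening of $W=\{z\in\Z^d:\wt Y_z=1\}$. By induction on the construction index, every site added to $B^{(i+1)}\setminus B^{(i)}$ satisfies $X^{(2)}\cap Q^\M_s(sz)\ne\es$, whereas $B^{(0)}\subset\{z:Y_z=2\}$; hence $B^{(\infty)}\subset W$, and since $R=\bigcup_{z\in B^{(\infty)}}S(z)$ one obtains the pointwise bound
$$1_{z^*\in R}\;\le\;\hat Y_{z^*}\;:=\;\max_{z\in S(z^*)}\wt Y_z.$$
A union bound yields $\P(\hat Y_{z^*}=1)\le 5^d\,\P(\wt Y_o=1)\to 0$ as $s\to\infty$. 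Because $\wt Y_z$ depends only on $X^{(1)}\cap Q^\M_{3s}(sz)$ and $X^{(2)}\cap Q^\M_s(sz)$, two variables $\wt Y_z,\wt Y_{z'}$ are independent whenever $|z-z'|_\infty\ge 3$, and consequently $\hat Y$ is $k$-dependent for a fixed constant $k$ (any $k\ge 7$ suffices).

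Finally, the Liggett--Schonmann--Stacey domination theorem ensures that, provided $\P(\hat Y_o=1)$ is small enough in terms of $k$, the process $\hat Y$ is stochastically dominated from above by a homogeneous Bernoulli site-percolation on $\Z^d$ with parameter strictly below $p_c(\Z^d)$. Choosing $s$ large enough that this applies, the inclusion $R\subset\{z^*:\hat Y_{z^*}=1\}$ finishes the proof. The principal delicacy lies in the second step: one has to verify carefully that every non-seed bad site necessarily has non-empty sprinkling in its box, so that the reveal cascade cannot propagate beyond the $S$-neighborhood of $W$; once this combinatorial bookkeeping is in place, the remainder is a routine appeal to standard $k$-dependent percolation domination.
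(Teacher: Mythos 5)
Your proof is correct and takes essentially the same route as the paper's. The paper bounds $R$ by $Y^{(a)}\cup Y^{(b)}$, where $Y^{(a)}$ flags sites whose $S$-neighborhood contains a non-$s$-good site and $Y^{(b)}$ flags sites $z$ with $X^{(2)}\cap Q^\M_{5s}(sz)\ne\es$; this is exactly your process $\hat Y_{z^*}=\max_{z\in S(z^*)}\wt Y_z$, since (as your bound on $\P(\wt Y_z=1)$ implicitly uses) $\wt Y_z=1$ is equivalent to $z$ not being $s$-good or $X^{(2)}\cap Q^\M_s(sz)\ne\es$. You supply more explicit detail on the combinatorial inclusion $R\subset\bigcup_{z\in B^{(\infty)}}S(z)$ than the paper does, and then both arguments finish with the Liggett--Schonmann--Stacey domination theorem.
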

\begin{proof}
First, $R\subset Y^{(a)}\cup Y^{(b)}$, where $Y^{(a)}\subset\Z^2$ denotes the set of sites $z\in\Z^2$ whose neighborhood $S(z)$ contains a site which is not $s$-good and where $Y^{(b)}\subset\Z^2$ consists of those $z\in\Z^2$ with $X^{(2)}\cap Q^\M_{3s}(sz)\ne\es$. We note that $Y^{(a)}$ and $Y^{(b)}$ are $5$-dependent site processes that are independent of each other. Moreover, the probability that a given site is contained in $Y^{(a)}\cup Y^{(b)}$ tends to $0$ by the definition of $X^{(2)}$ and the assumption $\lim_{s\to\infty}\P\(X^{(1)}\cap Q_{3s}^\M(o) \in A_s\)=1$. Hence, the claim follows from~\cite[Theorem 0.0]{domProd}.
\end{proof}
In the remaining part of this section, we fix $s\ge1$ \st $R$ is dominated by a sub-critical Bernoulli site-percolation process. Then, we define $X^{(3)}=X^{(1)}\cup\(X^{(2)}\cap (sR\oplus Q_s(o))\)$. In other words, to create the original point process $X$ from $X^{(3)}$ we only have to add the sprinkling $X^{(2)}$ in the unrevealed region $sR^c\oplus Q_s(o)$, where $R^c= \Z^2\setminus R$ denotes the complement of $R$ in $\Z^2$. 

In order to compare $\hco^{(\infty)}\({X^{(3)}},x\)$ and $\hco^{(\infty)}(X,x)$, it is important to understand the effect of adding the sprinkled nodes $X^{(2)}\cap (sR^c\oplus Q_s(o))$ in a step-by-step manner. 
\begin{lemma}
\label{itLem}
Let $R^c=\{z_1,z_2,\ldots\}$ be an arbitrary enumeration of $ R^c$ and put $X^{(2,i)}=\bigcup_{j=1}^i (X^{(2)}\cap Q^\M_s(sz_j))$ as well as $X^{(3,i)}=X^{(3)}\cup X^{(2,i)}$. Then, for every $i\ge0$ the following properties are satisfied.
\begin{enumerate}
\item Let $x\in X^{(3)}$. Then, either $\hco(X^{(3,i)},x)\in X^{(2,i)}\cap Q^\M_s(sz)$ for some $z\in R^c$ or $\hco(X^{(3,j)},x)=\hco(X^{(3)},x)$ \fa $j\in\{0,\ldots,i\}$.
\item Let $z\in R^c$, $x\in \enviIn_z(X^{(3)})$ and $\hco(X^{(3,j)},x)=\hco(X^{(3)},x)$ \fa $j\in\{0,\ldots,i\}$. Then $X^{(2,i)}\cap Q^\M_s(sz)=\es$.
\end{enumerate}
\end{lemma}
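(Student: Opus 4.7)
I will prove (i) and (ii) simultaneously by induction on $i$. The base case $i=0$ is immediate: $X^{(3,0)}=X^{(3)}$ forces the first alternative of (i), while $X^{(2,0)}=\es$ makes (ii) vacuous. For the inductive step, assume both properties hold at level $i$, abbreviate $z=z_{i+1}$, and inspect the transition $X^{(3,i)}\to X^{(3,i+1)}$. If $X^{(2)}\cap Q^\M_s(sz)=\es$ then $X^{(3,i+1)}=X^{(3,i)}$ and both claims propagate verbatim. Otherwise $z\in R^c$ rules out $Y_z=2$, so $Y_z=1$, i.e., $z$ is $s$-perfect.

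The core of the step is an application of condition (US) in coordinates translated by $-sz$, taking $\varphi_1=X^{(1)}-sz$, $\varphi_2=(X^{(2)}-sz)\cap Q^\M_s(o)$, and $\psi=((X^{(3,i)}\setminus X^{(1)})-sz)\setminus Q^\M_s(o)$, so that $\varphi_1\cup\psi=X^{(3,i)}-sz$ and $\varphi_1\cup\varphi_2\cup\psi=X^{(3,i+1)}-sz$. The central hypothesis $(\varphi_1\cap Q^\M_{3s}(o),\varphi_2)\in A^\p_s$ is exactly the $s$-perfectness of $z$. To verify the hypothesis on $\psi$ at a site $w$, note that a non-empty box of $\psi$ either comes from $X^{(2,i)}$ at a site in $\{z_1,\ldots,z_i\}\subset R^c$ (automatically $s$-perfect) or from the $R$-sprinkling. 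The latter is itself $s$-perfect whenever the underlying site has $Y=1$, and the remaining $R$-sprinklings sit at sites $z'$ with $Y_{z'}=2$, hence in $B$. Since $z\in R^c$ forces $S(z)\cap B=\es$, every such site lies at $\ell_\infty$-lattice distance at least $3$ from $z$ and therefore outside $Q^\M_{3s}(sz)$; truncating $\psi$ to a sufficiently large ball via the stabilisation of $\hco$ and repeating this separation argument at each sprinkling site of $\psi$ removes the pathological contributions from the verification.

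With (US) in place, clause (b) applied to $x\in X^{(1)}$ yields the dichotomy: either $\hco(X^{(3,i+1)},x)\in\varphi_2\subset X^{(2,i+1)}\cap Q^\M_s(sz)$ with $\hge(X^{(3,i+1)},x)\in Q_s(sz)$, realising the second alternative of (i), or $\hco$ is unchanged and $x\notin\enviIn_o(\varphi_1\cup\psi)$, which composed with the inductive hypothesis propagates the first alternative. For $x\in X^{(3)}\setminus X^{(1)}$, sitting inside $s$-perfect sprinklings, clause (a) of (US) renders $\hco(X^{(3,j)},x)$ constant in $j$. Claim (ii) at level $i+1$ reduces via the inductive hypothesis to the case $z=z_{i+1}$; assuming preservation through step $i+1$ together with $X^{(2)}\cap Q^\M_s(sz_{i+1})\ne\es$ for a contradiction, the equality $\hge(X^{(3,i)},x)=\hge(X^{(3)},x)\in Q_s(sz)$, which follows from $\hge(\varphi,x)=H(x,\hco(\varphi,x))$ and preservation of $\hco$, places $x\in\enviIn_o(\varphi_1\cup\psi)$ and so the second alternative of (US)(b) is unavailable; the first alternative then sends $\hco(X^{(3,i+1)},x)$ into $\varphi_2\subset X^{(2)}\cap Q^\M_s(sz)$, a set disjoint from $X^{(3)}$ for $z\in R^c$ and hence from $\hco(X^{(3)},x)\in X^{(3)}$, contradicting preservation.

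The delicate point throughout is the verification of the $\psi$-hypothesis of (US): not every component of $\psi$ originates at an $s$-perfect site, and disposing of the non-$s$-perfect $R$-sprinklings is what forces us into the two-site inflation built into the construction of $R$, combined with a careful appeal to the stabilisation of $\hco$ to localise the check.
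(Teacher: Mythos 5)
There is a genuine gap. You apply (US) with $\varphi_1 = X^{(1)} - sz$ and $\psi = (X^{(3,i)}\setminus X^{(1)} - sz)\setminus Q^\M_s(o)$. This $\psi$ then contains the entire $R$-sprinkling $X^{(2)}\cap(Q_s(o)\oplus R)$, which is supported on sites $z'\in R$, and at any such $z'$ with $Y_{z'}=2$ the pair $\((X^{(1)}-sz')\cap Q^\M_{3s}(o),(X^{(2)}-sz')\cap Q^\M_s(o)\)$ does \emph{not} lie in $A^\p_s$. So the $\psi$-hypothesis of (US) is violated. You notice this, but the attempted repair --- the observation that such $z'$ lie at $\ell_\infty$-lattice distance $\ge 3$ from $z$ together with ``truncating $\psi$ via stabilisation'' --- does not close the gap: the hypothesis on $\psi$ in (US) is checked \emph{at the site $z'$ itself}, namely $((\varphi_1-sz')\cap Q^\M_{3s}(o),(\psi-sz')\cap Q^\M_s(o))\in A^\p_s$; distance from $z$ is irrelevant to it. Moreover (US) is a deterministic implication about fixed finite configurations, so truncating $\psi$ yields conclusions about a different configuration, not about $X^{(3,i+1)}$; the stabilisation assumption on $\hco$ provides no quantitative locality you could use here.

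A second, related gap: for $x\in X^{(3)}\setminus X^{(1)}$ you invoke (US)(a). But with your decomposition such $x$ sits in $\psi$, not in $\varphi_1$ or $\varphi_2$, and (US) makes no assertion whatsoever about descendants of points of $\psi$.

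The paper resolves both problems by the same move: take $\varphi_1 = X^{(3)}$ (not $X^{(1)}$) and $\psi = X^{(2,i)}$. Then $\psi$ is supported on sites $z_1,\dots,z_i\in R^c$; any $z_j$ with nonempty sprinkling has $Y_{z_j}=1$, so the $\psi$-hypothesis holds. The point is that (US) only constrains $\varphi_1$ through its restrictions $(\varphi_1-sz')\cap Q^\M_{3s}(o)$, and the construction of $R$ builds in exactly the buffer you need: any site of $R$ at $\ell_\infty$-distance $\le 1$ (indeed $\le 2$) from a site of $R^c$ lies in $R\setminus B$ and hence carries no sprinkling, so $X^{(3)}$ agrees with $X^{(1)}$ on $Q^\M_{3s}(sz_{i+1})$ and on $Q^\M_{3s}(sz_j)$ for each relevant $j$. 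With $\varphi_1 = X^{(3)}$, every $x\in X^{(3)}$ --- including $x\in X^{(3)}\setminus X^{(1)}$ --- falls under (US)(b), and the induction step goes through cleanly.
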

\begin{proof}
We prove the desired properties by induction on $i$, the case $i=0$ being clear. If $X^{(2)}\cap Q^\M_s\(s{z_{i+1}}\)=\es$, then we deduce immediately that $X^{(3,i+1)}=X^{(3,i)}$ and can apply the induction hypothesis to conclude the proof. Therefore, we may assume $X^{(2)}\cap Q^\M_s(s{z_{i+1}})\ne\es$. 

After these preliminary observations, we begin with the proof of the first statement. Applying the definition of $s$-perfectness for $z=z_{i+1}$, $\varphi_1=X^{(3)}$ and $\psi =X^{(2,i)}$, we note that if $\hco(X^{(3,i+1)},x)=\hco(X^{(3,i)},x)$, then this statement also follows from the induction hypothesis. Observing that property 1. is also true in the remaining case, where $\hco(X^{(3,i+1)},x)\in X^{(2)}\cap Q^\M_s\(s{z_{i+1}}\)$ 
completes the proof of the first statement. 

Next, we verify the second statement. For $z\ne z_{i+1}$ the claim follows directly from the induction hypothesis, \sot we can concentrate on the case $z=z_{i+1}$ and $X^{(2)}\cap Q^\M_s\(s{z_{i+1}}\)\ne\es$. From  $\hco(X^{(3,i)},x)=\hco(X^{(3)},x)$ we conclude that $x\in \enviIn_z(X^{(3,i)})$, \sot  $s$-perfectness of $z_{i+1}$ implies $\hco(X^{(3,i+1)},x)\in X^{(2)}\cap Q^\M_{s}(s{z_{i+1}})$, contradicting the assumption $\hco(X^{(3,i+1)},x)=\hco(X^{(3)},x)$.
\end{proof}

The following result allows us to pass to the limit $i\to\infty$.

\begin{lemma}
\label{contLem}
 Let $x\in X^{(3)}$. Then, either $\hco(X,x)=\hco(X^{(3,i)},x)$ for all $i\ge0$ or $\hco(X,x)\in X^{(2)}\cap Q^\M_s(sz)$ for some $z\in R^c$.
Moreover, if $z\in R^c$, $x\in \enviIn_z(X^{(3)})$ and $\hco(X,x)=\hco(X^{(3)},x)$, then $X^{(2)}\cap Q^\M_s(sz)=\es$.
\end{lemma}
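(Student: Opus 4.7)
My plan is to invoke the stabilization hypothesis on $\hco$ applied to the increasing sequence $X^{(3,i)} \uparrow X$ and then reduce the two infinitary claims of Lemma~\ref{contLem} to the two finite-$i$ statements of Lemma~\ref{itLem}.

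Fix $x \in X^{(3)}$. Because $X^{(3,i)} \subset X^{(3,i+1)}$ and $\bigcup_{i \geq 0} X^{(3,i)} = X$ with all terms almost surely in $\N^\p$, the stabilization assumption on $\hco$ supplies an index $n_0 = n_0(x)$ with $\hco(X^{(3,n)}, x) = \hco(X, x)$ for every $n \geq n_0$. I then apply Lemma~\ref{itLem}(i) at $i = n_0$. If its first alternative holds, then in particular $\hco(X^{(3,n_0)}, x) = \hco(X^{(3)}, x)$, and combining with stabilization gives $\hco(X, x) = \hco(X^{(3)}, x) = \hco(X^{(3,i)}, x)$ for every $i \leq n_0$, while stabilization itself handles $i \geq n_0$. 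If instead the second alternative holds, then $\hco(X^{(3,n_0)}, x) \in X^{(2,n_0)} \cap Q^\M_s(sz) \subset X^{(2)} \cap Q^\M_s(sz)$ for some $z \in R^c$, and the identity $\hco(X^{(3,n_0)}, x) = \hco(X, x)$ delivers the claim.

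For the second assertion, fix $z \in R^c$ with $z = z_k$ in the enumeration, assume $x \in \enviIn_z(X^{(3)})$ and $\hco(X, x) = \hco(X^{(3)}, x)$, and choose $n_0 \geq k$ large enough that stabilization already applies. The second alternative of Lemma~\ref{itLem}(i) at $i = n_0$ would force $\hco(X^{(3,n_0)}, x)$ to lie in $X^{(2)} \cap Q^\M_s(sz')$ for some $z' \in R^c$; but stabilization together with the hypothesis identifies it with $\hco(X^{(3)}, x) \in X^{(3)}$. Since $X^{(1)} \cap X^{(2)} = \es$ by construction, and for $z' \in R^c$ the cell $Q_s(sz')$ meets $Q_s(o) \oplus R = \bigcup_{z \in R} Q_s(sz)$ only on shared faces that almost surely carry no Poisson point, the sets $X^{(3)}$ and $X^{(2)} \cap Q^\M_s(sz')$ are a.s.\ disjoint, which is a contradiction. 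Hence the first alternative of Lemma~\ref{itLem}(i) holds at $i = n_0$, and Lemma~\ref{itLem}(ii) applied to this $z$ yields $X^{(2,n_0)} \cap Q^\M_s(sz) = \es$. Because $n_0 \geq k$ and the cells $Q^\M_s(sz_j)$ are a.s.\ disjoint for distinct $j$, this coincides with $X^{(2)} \cap Q^\M_s(sz) = \es$.

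I do not expect a serious obstacle here: the proof is essentially a bookkeeping exercise reducing an $i \to \infty$ statement to a finite-$i$ statement via stabilization. The only mildly delicate step is the a.s.\ disjointness argument used to exclude the second alternative of Lemma~\ref{itLem}(i) in the second part, which is just the standard observation that Poisson points avoid the codimension-one boundaries of the lattice cells $\{Q_s(sz)\}_{z \in \Z^d}$.
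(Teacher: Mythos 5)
Your proposal is correct and follows the same route as the paper: invoke the stabilization hypothesis on the increasing chain $X^{(3,i)}\uparrow X$ to reduce the limit statements to a fixed index $i=n_0$, then feed the result into parts (i) and (ii) of Lemma~\ref{itLem}. The explicit a.s.\ disjointness of $X^{(3)}$ from $X^{(2)}\cap Q^\M_s(sz')$ for $z'\in R^c$ that you spell out in the second part is left implicit in the paper's terser "combining with part (ii)" step, but it is the same observation.
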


\begin{proof}
By continuity, $\hco(X,x)=\hco(X^{(3,i)},x)$ \fa sufficiently large $i\ge1$. \Ip, part $(i)$ of Lemma~\ref{itLem} implies that either $\hco(X,x)=\hco(X^{(3,i)},x)=\hco(X^{(3,j)},x)$ \fa $j\in\{0,\ldots,i\}$ or $\hco(X,x)=\hco(X^{(3,i)},x)\in X^{(2)}\cap Q^\M_s(sz)$ for some $z\in R^c$. Combining this result with part $(ii)$ of Lemma~\ref{itLem} yields the second part of the assertion.
\end{proof}

After this preliminary work, it is straightforward to establish the following comparison between the sets $\hco^{(\infty)}({X^{(3)}},x)$ and $\hco^{(\infty)}(X,x)$.
\begin{lemma}
\label{auxLem2}
If $x\in X$ is \st $\# \hco^{(\infty)}(X,x)=\infty$, then $\hco^{(\infty)}(X,x)\subset X^{(3)}$ and $\hco^{(n)}(X,x)=\hco^{(n)}(X^{(3)},x)$ \fa  $n\ge0$.
\end{lemma}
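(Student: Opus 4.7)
The plan is to prove the lemma by induction on $n$, using Lemma~\ref{contLem} as the main engine in the inductive step, while handling the initialization $x \in X^{(3)}$ as a separate and slightly more delicate argument.

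For the base case, I would first verify that $x$ itself lies in $X^{(3)}$. If not, then $x \in X^{(2)} \setminus X^{(3)} = X^{(2)} \cap (Q_s(o) \oplus R^c)$, so $x \in X^{(2)} \cap Q^\M_s(sz)$ for some $z \in R^c$. Since this intersection is nonempty, the definition of $Y_z$ combined with $R^c \subset \{Y_z \ne 2\}$ forces $Y_z = 1$, i.e.\ $z$ is $s$-perfect. Taking $\varphi_1 = X^{(1)} - sz$ and $\varphi_2 = (X^{(2)}-sz) \cap Q^\M_s(o)$ and passing to the limit over finite approximations $\psi$ of the remaining points outside $Q^\M_s(sz)$ via the stabilization axiom on $\hco$, condition (US)(a) yields $\hco(X, x') \in X^{(2)} \cap Q^\M_s(sz)$ for every $x' \in X^{(2)} \cap Q^\M_s(sz)$. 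Iterating, the entire forward orbit of $x$ under $\hco(X,\cdot)$ stays in this finite set, contradicting $\#\hco^{(\infty)}(X,x) = \infty$.

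For the inductive step, assuming $\hco^{(n)}(X,x) = \hco^{(n)}(X^{(3)},x) \in X^{(3)}$, set $y := \hco^{(n)}(X,x)$ and apply Lemma~\ref{contLem} to $y \in X^{(3)}$. In the first alternative, $\hco(X,y) = \hco(X^{(3)},y)$, and this value automatically lies in $X^{(3)}$ and coincides with $\hco^{(n+1)}(X^{(3)},x)$ by the inductive hypothesis; this closes the induction. In the second alternative, $\hco(X,y) \in X^{(2)} \cap Q^\M_s(sz)$ for some $z \in R^c$, and the same trap argument as in the base case shows that from step $n+1$ onward the orbit is confined to the finite set $X^{(2)} \cap Q^\M_s(sz)$, which once again contradicts infiniteness of $\hco^{(\infty)}(X,x)$. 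The containment $\hco^{(\infty)}(X,x) \subset X^{(3)}$ is then immediate from the pointwise identity.

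The main obstacle I anticipate is precisely the initialization: Lemma~\ref{contLem} is only stated for points of $X^{(3)}$, so the case $x \in X \setminus X^{(3)}$ has to be handled by applying (US)(a) directly, with the outer configuration $\varphi_1$ infinite. This is legitimate only because the stabilization axiom lets one reduce to finite sub-configurations and take a limit, and one has to verify that $s$-perfectness at site $z$ still lets us apply (US)(a) in this limit. A conceivable alternative would be to extend Lemma~\ref{contLem} to all of $X$ upfront, absorbing this step there; but the direct argument above seems short enough that a separate treatment is preferable.
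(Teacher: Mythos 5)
Your overall plan mirrors the paper's: a ``trap'' step using (US)(a) once the walker enters $X^{(2)} \cap Q^\M_s(sz)$ with $z \in R^c$, followed by Lemma~\ref{contLem} to propagate $\hco^{(n)}(X,x) = \hco^{(n)}(X^{(3)},x)$ by induction. The paper does the trap step once upfront for all $n$ at the same time and then invokes Lemma~\ref{contLem}, leaving the induction implicit; you intertwine the two, which is a harmless reorganization and not a different argument.

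However, the trap step as you set it up has a genuine gap. To conclude $\hco(X,x') \in X^{(2)} \cap Q^\M_s(sz)$ from (US)(a) you need, via stabilization, increasing finite sets $\psi$ with $\varphi_1 \cup \varphi_2 \cup \psi$ exhausting $X$ (up to the translation by $sz$). With your choice $\varphi_1 = X^{(1)} - sz$, the sets $\psi$ must eventually pick up the sprinkling $X^{(2)}$ at sites $z' \in R$ where, for instance, $Y_{z'} = 2$. For such a $z'$ the pair $\bigl((\varphi_1 - sz') \cap Q^\M_{3s}(o), (\psi - sz') \cap Q^\M_s(o)\bigr)$ does not lie in $A^\p_s$, so the hypothesis of (US) is violated and (US)(a) gives nothing about $\hco(X,x')$. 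The correct choice is $\varphi_1 = X^{(3)} - sz$: then the residual $\psi$ runs only over sprinkling at sites $z' \in R^c \setminus \{z\}$, which the $R$-construction guarantees to be either empty or $s$-perfect, and one also uses the fact (implicit in the proof of Lemma~\ref{itLem}) that $(X^{(3)} - sz') \cap Q^\M_{3s}(o) = (X^{(1)} - sz') \cap Q^\M_{3s}(o)$ for every $z' \in R^c$, so that $s$-perfectness defined via $X^{(1)}$ yields the required $A^\p_s$-membership for $\varphi_1 = X^{(3)} - sz$. You flagged this as the delicate step, but the resolution you sketched with $\varphi_1 = X^{(1)} - sz$ does not survive verification.
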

\begin{proof}
Let $x\in X$ be \st $\#\hco^{(\infty)}(X,x)=\infty$.
If \tes $n\ge0$ with $\hco^{(n)}(X,x)\in X^{(2)}\cap Q^\M_s(sz)$ for some $z\in R^c$, then $\#\hco^{(\infty)}(X,x)<\infty$. Indeed, choosing $i\ge0$ \sot $\hco(X^{(3,i)},\hco^{(n)}(X,x))=\hco^{(n+1)}(X,x)$, we can apply part (a) of property (US) to $\varphi^{(1)}=X^{(3)}$, $\psi=X^{(2,i)}\setminus Q^\M_s(sz)$ and $\varphi_2=X^{(2)}\cap Q^\M_s(sz)$ to deduce that $\hco^{(n+1)}(X,x)\in X^{(2)}\cap Q^\M_s(sz)$. Hence, using induction, we conclude that $\hco^{(m)}(X,x)\in X^{(2)}\cap Q^\M_s(sz)$ \fa $m\ge n$, which implies that $\#\hco^{(\infty)}(X,x)<\infty$. This observation yields $\hco^{(\infty)}(X,x)\subset X^{(3)}$ and the first statement in Lemma~\ref{contLem} allows us to conclude that $\hco^{(n)}(X,x)=\hco^{(n)}(X^{(3)},x)$ \fa $n\ge0$, as desired.
\end{proof}
\begin{proof}[Proof of Theorem~{\hyperref[part2]{\ref*{mainProp}.\ref*{part1}}}]
Assume the contrary. Then, by Lemma~\ref{auxLem2}, \tes $x\in X^{(3)}$ with
 $\# \hco^{(\infty)}(X^{(3)},x)=\infty$ and  $\hco^{(n)}(X,x)=\hco^{(n)}({X^{(3)}},x)$ \fa $n\ge0$.
Denote this event by $A^{*}_x$. It suffices to show that $\P\(A^{*}_x\mid X^{(1)},X^{(3)}, R\)=0$ \fa $x\in X^{(3)}$.

Putting $\hge^{(n)}(X^{(3)},x)=\hge\big(X^{(3)},\hco^{(n-1)}(X^{(3)},x)\big)$, we consider the sequence of geometric descendants $\big(\hge^{(n)}(X^{(3)},x)\big)_{n\ge1}$. First, we assert that $\big(\hge^{(n)}(X^{(3)},x)\big)_{n\ge1}$ hits infinitely many squares of the form $Q_s(sz)$ with $z\in R^c$. If $z,z^\p\in\Z^2$ are \st $\hge^{(n)}(X^{(3)},x)\in Q_s(sz)$ and $\hge^{(n+1)}(X^{(3)},x)\in Q_s(sz^\p)$, then applying condition (SH) with the set $B$ chosen as the connected component of $\{z\}\cup R$ containing $z$ shows that 
 $z^\p\in B\oplus Q_3(o)$ (noting that Lemma~\ref{revLem} implies the finiteness of $B$). \Ip, if $\hge^{(n+1)}(X^{(3)},x)$ does not lie in $sB\oplus Q_s(o)$, then $z^\p$ is contained in the outer boundary of $B$, which forms a subset of $R^c$. Hence, if $\#\hco^{(\infty)}\({X^{(3)}},x\)=\infty$, then after performing finitely many steps we obtain a geometric descendant contained in $sR^c\oplus Q_s(o)$. Since $\big(\hge^{(n)}(X^{(3)},x)\big)_{n\ge1}$ hits each bounded Borel set only a finite number of times, this proves the assertion. Hence, \te infinitely many $z_{1},z_{2},\ldots\in R^c$ and $i_1,i_2,\ldots\ge1$ \st $\hco^{(i_j)}(X^{(3)},x)\in\enviIn_{z_j}(X^{(3)})$ \fa $j\ge1$.  Moreover, we note that Lemma~\ref{contLem} implies $X^{(2)}\cap Q^\M_s(s{z_{j}})=\es$ \fa $j\ge1$.

However, we also \ob that conditioned on $X^{(1)}$, $X^{(3)}$ and $R$ the restriction of the site process $\{ Y_z\}_{z\in\Z^2}$ to $ R^c$ defines a $\{0,1\}$-valued Bernoulli site process \st for $z\in R^c$ the (conditional) probability of the event $\{ Y_z=1\}$ is given by $\P\(Y_z=1\mid Y_z\in\{0,1\},X^{(1)} \)$. \Ip, the events $\{ Y_{z_j}=1\}$, $j\ge1$ occur independently given $X^{(1)}$, $X^{(3)}$ and $R$  and by~\eqref{posChanceLem} \wh $\P\big(Y_z=1\mid Y_z\in\{0,1\},X^{(1)} \big)\ge p_s$ a.s. Therefore, with probability $1$, \tes $j_0\ge1$ with $Y_{z_{j_0}}=1$ contradicting the previously derived $X^{(2)}\cap Q^\M_s(sz_{j_0})=\es$. 
\end{proof}

\section{Proofs of auxiliary results}
\label{lilySec}
In the present section we provide the proof of the three auxiliary results that were used in the proof of the absence of forward percolation, Propositions~\ref{contProp},~\ref{shieldCond} and~\ref{stopCond}.

\subsection{Proof of Proposition~\ref{shieldCond}}
To begin with, we show that with high probability the length $\nu_1(Q_s(o)\cap I(X,x))$ of the intersection of a given square $Q_s(o)$ with any segment of the form $I(X,x)=[\xi,\hge(X,x)]$ is not too large.
\begin{lemma}
\label{extInfLem}
Let $\alpha>0$. Then, \tes a family of events $\big(A^{(1,\alpha)}_s\big)_{s\ge1}$ with 
$$\lim_{s\to\infty}\P\big(X^{(1)}\cap Q^\M_{s}(o)\in A^{(1,\alpha)}_s\big)=1$$ and \st the following property is satisfied. If $\varphi\subset Q^\M_s(o)$ is \st $\varphi\in A^{(1,\alpha)}_s$, then for every locally finite $\psi\subset \R^{2,\M}\setminus Q^\M_s(o)$ with $\varphi\cup\psi\in \N^\p$ and every $x\in \varphi\cup\psi$,
$$\nu_1\(I\(\varphi\cup \psi,x\)\cap Q_s(o)\)\le s^\alpha.$$ 
\end{lemma}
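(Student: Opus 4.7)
The strategy is to choose $A^{(5,\alpha)}_s$ as a uniform density condition on $X^{(1)}\cap Q^\M_s(o)$ and then force the desired length bound via the hard-core property of Corollary~\ref{lilyDef}. Specifically, I would partition $Q_s(o)$ into a regular grid of axis-aligned sub-squares of side length $\delta_s:=s^{\alpha/4}$ and define $A^{(5,\alpha)}_s$ as the event that every sub-square $C$ of this grid contains at least $\lceil \log^2 s\rceil$ points of $\varphi$ with each of the four directional marks $\pm e_1,\pm e_2$. Since the Poisson count per sub-square and direction has mean of order $\delta_s^2=s^{\alpha/2}\gg \log^2 s$, a Chernoff bound yields a failure probability at most $\exp(-cs^{\alpha/2})$ per sub-square, and a union bound over the $O(s^{2-\alpha/2})$ sub-squares and the four marks gives $\lim_{s\to\infty}\P(X^{(1)}\cap Q^\M_s(o)\in A^{(5,\alpha)}_s)=1$.

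\textbf{Deterministic bound under $A^{(5,\alpha)}_s$.} \Su that, toward a contradiction, \tes a segment $I=[\xi,\hge(\varphi\cup\psi,x)]$ with $\nu_1(I\cap Q_s(o))>s^\alpha$. Up to symmetry, $x=(\xi,e_1)$ and $I\cap Q_s(o)=[a,b]\times\lcu y_0\rcu$ with $b-a>s^\alpha$. Since $b-a\gg\delta_s$, I can select a sub-square $C$ of the grid whose horizontal extent lies inside $(a,b)$ and whose vertical extent sits in $(y_0,y_0+\delta_s]\subset Q_s(o)$, or symmetrically below $y_0$ if the upper strip exits $Q_s(o)$. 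By $A^{(5,\alpha)}_s$, $C$ contains at least $\log^2 s$ points $y=(\eta,-e_2)\in\varphi$ with downward direction. By the hard-core property of Corollary~\ref{lilyDef}(1), the downward segment from $y$ cannot cross $I$, forcing $f(y)<\eta_y-y_0\le\delta_s$. Hence each such $y$ is stopped by a horizontal segment whose origin $\eta'\in\varphi\cup\psi$ satisfies the strict inequality $\lver\hge(\varphi\cup\psi,y)-\eta'\rver<f(y)$ of Corollary~\ref{lilyDef}(2). An axis-parallel computation shows $\lver\eta-\eta'\rver_\infty=f(y)$, and iterating the stopping-neighbor relation along the alternating horizontal/vertical chain yields a sequence of origins in $\varphi\cup\psi$ forming an anisotropic descending chain in the sense of Definition~\ref{aniDescChainDef}. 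Since $\varphi\cup\psi\in\N^\p$ forbids such chains, this contradicts the assumption $\nu_1(I\cap Q_s(o))>s^\alpha$.

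\textbf{Main obstacle.} The principal technical difficulty is verifying rigorously that the iterated stopping cascade yields strictly decreasing $\lver\cdot\rver_\infty$-distances between consecutive origins. The identity $\lver\eta^{(i)}-\eta^{(i+1)}\rver_\infty=f(y^{(i)})$---which follows from the axis-parallel geometry and the strict inequality of Corollary~\ref{lilyDef}(2)---reduces the task to proving $f(y^{(i+1)})<f(y^{(i)})$ at each step, and this does not follow directly. It can likely be enforced by choosing the iterated neighbor more carefully (e.g., always taking the stopper whose own stopping point is closest to its origin), or by an auxiliary packing argument that leverages the confinement of the cascade to the thin slab above $I$. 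A secondary obstacle is that the cascade could in principle escape $Q_s(o)$ where the density condition on $\varphi$ no longer applies; this may be resolved either by strengthening $A^{(5,\alpha)}_s$ to also constrain $X^{(1)}\cap Q^\M_{3s}(o)$, or by a separate geometric argument confining the chain.
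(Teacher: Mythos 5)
The proposal does not work, and the gap you flag as the ``main obstacle'' is not a mere technicality to be patched but a fatal flaw. The chain of stoppers within a \emph{single} configuration is not an anisotropic descending chain. The stopping-neighbor relation of Corollary~\ref{lilyDef}(2) only gives $\lver\hge(\varphi\cup\psi,y)-\eta'\rver<f(y)$ and $\hge(\varphi\cup\psi,y)\in[\eta',\hge(\varphi\cup\psi,y'))$, which actually forces $f(y')>\lver\hge(\varphi\cup\psi,y)-\eta'\rver$ but puts no \emph{upper} bound on $f(y')$; the stopper's own segment can be far longer than the segment it stops. Your identity $\lver\eta-\eta'\rver_\infty=f(y)$ is correct, but without $f(y')<f(y)$ the $\lver\cdot\rver_\infty$-gaps along the iterated cascade need not decrease, so no chain in the sense of Definition~\ref{aniDescChainDef} is produced. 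The descending-chain mechanism in this paper (Lemma~\ref{descChainDiffLem2}) is deliberately a \emph{two-configuration} comparison: strict inequality is inherited because $x_{i+1}$ is the $\varphi$-descendant but belongs to the larger configuration $\varphi'$, so its $\varphi'$-segment is shorter than the piece needed to reach $\hge(\varphi,x_i)$; that inheritance has no analogue inside a single configuration. Your suggested fixes do not repair this: there is no freedom to ``choose the stopper more carefully'' since the combinatorial descendant is uniquely determined by Corollary~\ref{lilyDef}, and a packing argument on the thin slab does not prevent the cascade from wandering.

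There is also a second, independent gap. Density of perpendicular germs near $I$ does not by itself stop $x$'s segment. All the nearby down-pointing germs having short segments is entirely consistent with $x$'s segment continuing unobstructed: those short germs can be stopped by one another, or by other short segments within the dense cluster, \emph{without any of them ever reaching the line carrying $I$}. What is needed is a guarantee that some perpendicular segment actually crosses the carrier line of $I$ at a position and time that forces $x$ to be the hitting segment. The paper achieves exactly this by demanding \emph{local isolation}, not density: it places a single up-pointing germ $y$ in a small rectangle $R_2$ within vertical distance $\delta$ of the carrier line, and requires the surrounding rectangle $R_1$ (of horizontal half-width $2.5\delta$) to be otherwise empty. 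Then $y$'s segment needs time at most $2\delta$ to reach the line, while any potential stopper of $y$ outside $R_1$ needs time at least $2\delta$ to reach $y$'s column, and $x$, having entered the box at the boundary, needs time at least $5\delta$ to reach the horizontal coordinate of $y$. So $y$'s segment arrives first and $x$ is the hitting (and hence stopping) segment. Laying out $O(s^\alpha/\delta)$ such candidate centers along each possible carrier strip and using independence across them makes the probability of total failure sub-exponentially small. That timing-race-via-emptiness mechanism is the key idea of the paper's proof and is absent from your proposal.
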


\begin{proof}
Without loss of generality, we may assume $\alpha<1$. We consider the cases $x\in \psi$ and $x\in \varphi$ separately and start with the case $x\in\psi$. By rotational and reflection symmetry, it suffices to prove that with high probability for every locally finite $\psi\subset\R^{2,\M}\setminus Q^\M_{s}(o)$ and $x=(\xi,v)\in\psi$ with $v=e_1$, $\pi_1(\xi)<0$ and $\pi_2\(\xi\)\in [0,s/2]$ \wh
$$\nu_1\big(I(X^{(1)}\cap Q^\M_{s}(o)\cup\psi,x)\cap Q_s(o)\big)\le s^\alpha.$$ 
Here $\pi_i:\R^{2}\to \R$ denotes the projection to the $i$th coordinate.
Put $\delta=s/\lfloor s^{1-\alpha/2}\rfloor$, $R_1=[-2.5\delta,2.5\delta]\times[-\delta,\delta]$ and $R_2=[-\delta/2,\delta/2]\times[-\delta,0]$. For $\xi\in\R^2$ we denote by $E^{}_{\xi}$ the intersection of the events $\varphi\cap ((\xi+R_2)\times \{e_2\})\ne\es$ and
$\varphi\cap ((\xi+R_1)\times \M)\subset(\xi+R_2)\times \{e_2\}$. See Figure~\ref{extInfFig} for an illustration.

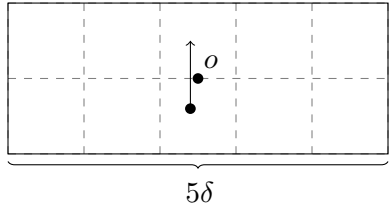
\begin{figure}[!htbp]
\centering
\begin{tikzpicture}[rotate=-90]
\draw[step=1cm,gray,dashed] (0,0) grid (2,5);
\draw (0,0) rectangle (2,5);
\fill[black] (1.4,2.4) circle (2pt);
\fill[black] (1.0,2.5) circle (2pt);
\draw[->] (1.4,2.4)--(0.5,2.4);
\coordinate[label=135:$o$] (u) at (1.0,2.9);
\draw[decorate,decoration=brace] (2.1,5.0)--(2.1,0.0);
\coordinate[label=right:$5\delta$] (u) at (2.5,2.2);
\end{tikzpicture}
\caption{Occurrence of $E_{o}$}
\label{extInfFig}
\end{figure}
Then, 
$$\P(X^{(1)}\cap Q^\M_{s}(o)\in E_\xi)\ge \exp\(-10(1-s^{-3})\delta^2\)\(1-\exp\(-(1-s^{-3})\delta^2/8\)\)\ge {\delta^2}/16$$ \fa $\xi\in\R^2$ and all $s>0$ sufficiently large. For $\sigma\in\R$ denote by $M^{}_{\sigma}\subset\R^2$ the set $\{(-s/2+5i\delta) e_1+\sigma e_2: 0\le i\le \lfloor s^\alpha/\(5\delta\)\rfloor\}$, \sot
$$
1-\P\big(X^{(1)}\cap Q^\M_{s}(o)\in \cup_{\xi\in M_\sigma^{}} E_{\xi}\big)\le \(1-\delta^2/16\)^{\lfloor s^\alpha/\(5\delta\)\rfloor+1}.
$$
Since $s^\alpha\delta^{}\ge s^{\alpha/2}$, we see that $\P\(X^{(1)}\cap Q^\M_{s}(o)\in \bigcap_{\xi\in M_{\sigma}^{}} E_{\xi}^c\)$ decays sub-exponentially fast as $s\to\infty$. Therefore also $\P\(X^{(1)}\cap Q^\M_{s}(o)\in \bigcup_{j=1}^{s/\delta-1}\bigcap_{\xi\in M_{-s/2+j\delta}^{}} E_{\xi}^c\)$ decays sub-exponentially fast in $s$. Note that $\varphi\in \bigcup_{\xi\in M_{-s/2+j\delta}} E_{\xi}$ implies that for every locally finite $\psi\subset\R^{2,\M}\setminus Q^\M_s(o)$ with $\varphi\cup\psi\in \N^\p$ and every $(\xi,v)\in\psi$ with $v=e_1$, $\pi_1(\xi)<0$ and $\pi_2(\xi)\in[-s/2+j\delta,-s/2+(j+1)\delta]$ \wh $\nu_1\(I\(\varphi\cup\psi,x\)\cap Q_s(o)\)\le s^\alpha$. This proves the first case of the claim.

Next, consider the case $x\in \varphi$. Using the Slivnyak-Mecke formula this part can be proven similarly as the case $x\in\psi$, but we include some details for the convenience of the reader. Again, by symmetry it suffices to prove that with high probability for every locally finite $\psi\subset\R^{2,\M}\setminus Q^\M_{s}(o)$ with $X^{(1)}\cap Q^\M_{s}(o)\cup\psi\in\N^\p$ and $x=(\xi,v)\in X^{(1)}\cap Q^\M_{s}(o)$ with $v=e_1$, $\pi_1(\xi)<s/2-s^\alpha$ and $\pi_2(\xi)\in (0,s/2)$ \wh
$$\nu_1\big(I(X^{(1)}\cap Q^\M_{s}(o)\cup\psi,x)\cap Q_s(o)\big)\le s^\alpha.$$ 
 For $\xi\in\R^2$ denote by $M^{\p}_{\xi}\subset\R^2$ the set $\{\xi+5i\delta e_1: 0\le i\le \lfloor s^\alpha/\(5\delta\)\rfloor \}$. 
Note that  $\varphi\in \bigcup_{\eta\in M_{\xi}^{\p}} E_{\eta}$ implies that for every locally finite $\psi\subset\R^{2,\M}\setminus Q^\M_s(o)$ \st $\varphi\cup\psi\in \N^\p$ \wh $\nu_1\(I\(\varphi\cup\psi,x\)\cap Q_s(o)\)\le s^\alpha$. 
 Moreover, by the Slivnyak-Mecke formula the expectation of the number $N$ of points $x=(\xi,e_1)\in X^{(1)}\cap Q^\M_{s}(o)$ for which the event $X^{(1)}\cap Q^\M_{s}(o)\in\bigcup_{\eta\in M_{\xi}^{}} E_{\eta}$ occurs is given by
$$\E N=\lambda \int_{Q^\M_{s}(o)}\P\Big(\big(X^{(1)}\cap Q^\M_{s}(o)\cup \{ (\xi,v)\}\big)\in\cup_{\eta\in M_{\xi}^{}} E_{\eta} \Big) \d (\xi,v).$$
By a similar argument as in the case $x\in \psi$, we see that the probability inside the integrand decays sub-exponentially fast in $s$ (uniformly over all $(\xi,v)\in Q^\M_{s}(o)$), which completes the proof Lemma~\ref{extInfLem}.
\end{proof}
\begin{remark}
A suitable analog of Lemma~\ref{extInfLem} can also be shown for isotropic line-segment models, but the proof becomes more involved. Indeed, a similar construction can be used, but now instead of four directions, one considers the shielding property seen from a set of directions with size growing polynomially in $s$.
\end{remark}
This auxiliary result immediately verifies condition (SH), when using the family of events $\big(A^{(1,\alpha)}_s\big)_{s\ge1}$ for some arbitrary $\alpha\in(0,1/2)$.
\begin{proof}[Proof of Proposition~\ref{shieldCond}]
Let $B\subset \Z^2$ be a finite set of sites and denote by $B^\p$ the outer boundary of $B$. Moreover, let $\varphi\in \N^\p$ be \st $(\varphi-sz^\p)\cap Q_{s}^\M(o)\in A^{(1,\alpha)}_s$ \fa $z^\p\in B^\p$ and $x=(\xi,v)\in\varphi$ be \st $\hge(\varphi, x)\in sB\oplus Q_s(o)$. Put $(\eta,w)=\hco(\varphi,x)$ and $D=\R^2\setminus (sB\oplus Q_{3s}(o))$. Using Lemma~\ref{extInfLem} twice implies that 
$\mathsf{dist}(\eta,D)\ge s/2$ and $\mathsf{dist}(\hge\(\varphi,(\eta,w)\),D)\ge s/4$, as desired.
\end{proof}

\subsection{Proof of Proposition~\ref{contProp}}
\label{contSec}
Next, we consider the property of continuity. This has already been investigated for a large class of lilypond models and is typically based on suitable descending chains arguments, see e.g.~\cite{lilypond6,lilypond5}. This approach also yields the desired result for the present anisotropic model, but for the convenience of the reader, we provide a detailed proof. 

In the following, for $\varphi\in\N^\p$ and $x=(\xi,v)\in\varphi$, it is convenient to write $f_\varphi(x)$ instead of $|\xi-\hge(\varphi,x)|$. First, we investigate how the behavior of $f_\varphi$ is related to the existence of long descending chains.
\begin{lemma}
\label{descChainDiffLem2}
Let $\varphi,\varphi^{\p}\in \N^\p$ and suppose that $x_1\in\varphi^{}\cap\varphi^{\p}$ is \st $f_{\varphi^{}}(x_1)< f_{\varphi^{\p}}(x_1)$. \Fm, define recursively $x_{i+1}\in\varphi^{}\cup\varphi^{\p}$ by 
$$x_{i+1}=\begin{cases}\hco\({\varphi^{}},x_i\)&\text{ if }x_i\in\varphi^{}\\ x_i&\text{else}\end{cases}$$
if $i$ is odd and by 
$$x_{i+1}=\begin{cases}\hco\({\varphi^{\p}},x_i\)&\text{ if }x_i\in\varphi^{\p}\\ x_i&\text{else}\end{cases}$$
if $i$ is even. Finally, put $i_0=\min\{i\ge1: x_i\not\in \varphi^{}\cap\varphi^{\p}\}$. Then, $(x_i)_{1\le i\le i_0}$ constitutes an $f_{\varphi^{}}(x_1)$-bounded anisotropic descending chain of pairwise distinct elements. Additionally, $f_{\varphi^{}}(x_i)<f_{\varphi^{\p}}(x_i)$ if $i\in\{1,\ldots i_0-1\}$ is odd and $f_{\varphi^{\p}}(x_i)<f_{\varphi^{}}(x_i)$ if $ i\in\{1,\ldots,i_0-1\}$ is even. \Ip, the absence of infinite anisotropic descending chains in $\varphi^{}$ and $\varphi^{\p}$ implies $i_0<\infty$.
\end{lemma}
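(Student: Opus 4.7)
The plan is to prove all the claims simultaneously by induction on $i$, carrying two parallel invariants. Writing $x_j=(\xi_j,v_j)$, let $g_i$ denote the stopping radius actually used at step $x_i\mapsto x_{i+1}$: $g_i=f_\varphi(x_i)$ when $i$ is odd, $g_i=f_{\varphi'}(x_i)$ when $i$ is even. I would maintain (A) the strict monotonicity $g_{i+1}<g_i$ and (B) for $1\le i<i_0$ the radius of $x_i$ in the \emph{opposite} graph strictly exceeds $g_i$, i.e.\ $f_{\varphi'}(x_i)>f_\varphi(x_i)$ for odd $i$ and $f_\varphi(x_i)>f_{\varphi'}(x_i)$ for even $i$. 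The base case is the standing hypothesis.

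For the inductive step I take $i+1<i_0$ and assume without loss of generality that $i$ is odd, the even case being completely symmetric under swapping $\varphi\leftrightarrow\varphi'$. Then $x_{i+1}=\hco(\varphi,x_i)$ and $x_i,x_{i+1}\in\varphi\cap\varphi'$. Corollary~\ref{lilyDef}(ii) provides $\hge(\varphi,x_i)=\xi_{i+1}+t_iv_{i+1}$ with $0\le t_i<\min\bigl(f_\varphi(x_i),f_\varphi(x_{i+1})\bigr)$. By (B) at $i$ the segment of $x_i$ in $\varphi'$ has length $f_{\varphi'}(x_i)>g_i$ and so strictly contains the point $\xi_i+g_iv_i=\xi_{i+1}+t_iv_{i+1}$. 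Applying the hard-core property in $\varphi'$ to the pair $x_i,x_{i+1}$ then forces $f_{\varphi'}(x_{i+1})\le t_i$, which simultaneously establishes the new (B), $f_{\varphi'}(x_{i+1})<f_\varphi(x_{i+1})$, and the new (A), $g_{i+1}=f_{\varphi'}(x_{i+1})\le t_i<g_i$.

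To convert monotonicity of $g_i$ into the descending-chain inequality I write $\xi_{i+1}-\xi_i=g_iv_i-t_iv_{i+1}$. If $v_i=\pm v_{i+1}$, this vector is a nonzero multiple of a coordinate vector, so $(\xi_i-\xi_{i+1})/\lver\xi_i-\xi_{i+1}\rver\in\lcu\pm e_1,\pm e_2\rcu$, contradicting the admissibility condition built into $\N^\p$. Hence $v_i\perp v_{i+1}$ and $\lver\xi_{i+1}-\xi_i\rver_\infty=\max(g_i,t_i)=g_i$; this computation uses $\hco$ in only one graph and so applies verbatim at the boundary step $i=i_0-1$. Combined with (A) it yields the $f_\varphi(x_1)$-bounded anisotropic descending chain. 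Pairwise distinctness follows: a coincidence $x_i=x_j$ with $j<i<i_0$ of equal parity would give $g_i=g_j$, contradicting (A); of opposite parity it would reverse the inequality in (B) at the common point; and $x_{i_0}$ cannot coincide with any earlier $x_j$ since it lies outside $\varphi\cap\varphi'$.

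The ``in particular'' statement is then immediate, because $x_i\in\varphi\cap\varphi'\subset\varphi$ for every $i<i_0$, so an infinite sequence would produce an infinite anisotropic descending chain inside $\varphi\in\N^\p$, which is excluded. The main obstacle I anticipate is just the bookkeeping of which graph's hard-core property is invoked at each step, together with the requirement that $x_{i+1}\in\varphi'$ when one appeals to the hard-core property in $\varphi'$; both are automatic from $i+1<i_0$, and the single remaining step to $i_0$ is absorbed by the displacement computation which does not rely on invariant (B) at the terminal index.
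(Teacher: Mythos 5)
Your proof is correct and follows essentially the same route as the paper's: in both cases the key step is that the stopping point $\hge(\varphi,x_i)$ lies strictly inside both segments in the graph just used, and the hard-core property in the \emph{other} graph then forces the next radius in that graph to be $\le t_i$, which at once propagates the alternating inequality and drives the strict decrease of the hop lengths $g_i = |\xi_{i+1}-\xi_i|_\infty$. Your explicit invariant/book-keeping formulation via $(g_i,t_i)$ and the explicit $v_i\perp v_{i+1}$ remark merely spell out steps the paper compresses into inequality \eqref{descEq} and the phrase "follows from the definition of stopping neighbors," and your parity-based distinctness argument is likewise the same as the paper's.
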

\begin{proof}
At first glance, it might not be obvious how it is possible to have $f_{\varphi}(x_1)< f_{\varphi^\prime}(x_1)$ and $x_2\in\varphi\cap\varphi^\prime$. In other words, how can it be that the segment at $x_2$ stops the growth of the segment at $x_1$ in the lilypond model built from $\varphi$, but not in the one built from $\varphi^\p$. A more thorough thought reveals that this effect occurs if in the configuration $\varphi^\p$ the segment at $x_3$ stops the growth of the segment at $x_2$ before the latter can stop the growth of the segment at $x_1$.
Next, we extend this observation into a rigorous proof of the lemma and write $x_i=(\xi_i,v_i)$, $i\ge1$. The relation $\left|\xi_1-\xi_2\right|_\infty=f_{\varphi^{}}(x_1)$ follows immediately from the definition of stopping neighbors. Now, assume that $i\in\{2,\ldots,i_0-1\}$ is odd. From $x_i=\hco({\varphi^{\p}},x_{i-1})$ and $x_{i+1}=\hco(\varphi^{},x_{i})$ we conclude $|\xi_{i}-\xi_{i-1}|_\infty=f_{\varphi^{\p}}(x_{i-1})$ and $\left|\xi_{i+1}-\xi_{i}\right|_\infty=f_{\varphi^{}}(x_{i})$. \Fm, by induction $f_{\varphi^{\p}}(x_{i-1})<f_{\varphi^{}}(x_{i-1})$, \sot
\begin{align}
\label{descEq}
f_{\varphi^{}}(x_i)<\left|\xi_i-\hge(\varphi^{\p},x_{i-1})\right|<\min\(f_{\varphi^{\p}}(x_{i-1}), f_{\varphi^{\p}}(x_i)\).
\end{align}
The case of even $i\in\{2,\ldots,i_0-1\}$ is analogous.

It remains to show that the $\{x_i\}_{1\le i\le i_0}$ are pairwise disjoint and by the definition of $i_0$, it suffices to prove this claim for $\{x_i\}_{1\le i<i_0}$. \Fm, as $f_{\varphi^{}}(x_i)<f_{\varphi^{\p}}(x_i)$ if $1\le i<i_0$ is odd and $f_{\varphi^{\p}}(x_i)<f_{\varphi^{}}(x_i)$ if $1\le i<i_0$ is even, it suffices to consider the pairwise disjointness of the points in $\{x_i\}_{\substack{1\le i<i_0\\i\text{ even}}}$ and the points in $\{x_i\}_{\substack{1\le i<i_0\\i\text{ odd}}}$ separately. We consider for instance the case of even $i\in\{1,\ldots,i_0-1\}$. Then, we note that an application of~\eqref{descEq} and its analog for even parity yields
\begin{align*}
f_{\varphi^{\p}}(x_i)>f_{\varphi^{}}(x_{i+1})>f_{\varphi^{\p}}(x_{i+2}),
\end{align*}
\sot by induction $f_{\varphi^{\p}}(x_i)>f_{\varphi^{\p}}(x_j)$ \fa even $i,j\in\{1,\ldots,i_0-1\}$ with $i<j$.
\end{proof}
As corollary, we verify the continuity property of the lilypond model.
\begin{proof}[Proof of Proposition~\ref{contProp}]
Let $x\in\varphi$ be arbitrary and choose $n_0^\p\ge1$ \st $x\in\varphi_{n_0^\p}$. Lemma~\ref{descChainDiffLem2} implies that if $n\ge1$ and $s>0$ are \st $\varphi\cap Q^\M_s(o)=\varphi_n\cap Q^\M_s(o)$, but $\hco(\varphi,x)\ne\hco(\varphi_n,x)$, then \tes a $f_{\varphi}(x)$-bounded anisotropic descending chain starting in $x$ and leaving $Q_s(o)$. \Ip, if $n_1<n_2<\cdots$ is an increasing sequence with $\hco(\varphi,x)\ne\hco(\varphi_{n_i},x)$ \fa $i\ge1$, then there exist arbitrarily long $f_{\varphi}(x)$-bounded anisotropic chains starting at $x$. Since $\varphi$ is locally finite, this would produce an infinite $f_{\varphi}(x)$-bounded anisotropic chain starting at $x$, thereby contradicting the definition of $\N^\p$. 
\end{proof}
We conclude the investigation of the continuity property by showing that if $X\subset\R^{2,\M}$ is an independently and uniformly marked homogeneous Poisson point process, then $\P(X\in\N^\p)=1$. To obtain bounds for the probability of observing long anisotropic descending chains, the following auxiliary computation is useful.
\begin{lemma}
\label{intCompLem}
Let $b\ge0$, $k\ge0$ and $\xi_0\in\R^2$. Then, 
$\int_{\R^2} 1_{|\xi-\xi_0|_\infty\le b}|\xi-\xi_0|_\infty^{2k} \d x={4}b^{2k+2}/({k+1}).$
\end{lemma}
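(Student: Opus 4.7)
The proof is a direct computation. First I would reduce to the centered case $\xi_0 = 0$ by a change of variables $\xi \mapsto \xi - \xi_0$, which leaves both the indicator and the norm invariant, so the integral equals
\[
\int_{\R^2} 1_{|\xi|_\infty \le b}\, |\xi|_\infty^{2k}\, \d \xi.
\]

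Next I would use the layer-cake / coarea principle adapted to the $\ell^\infty$-norm. The key geometric fact is that the ball $\{\xi \in \R^2 : |\xi|_\infty \le r\}$ is the square $[-r,r]^2$ of Lebesgue measure $4r^2$. Consequently, for any nonnegative measurable $g$,
\[
\int_{\R^2} g(|\xi|_\infty)\, \d \xi \;=\; \int_0^\infty g(r)\,\frac{\d}{\d r}\bigl(4r^2\bigr)\,\d r \;=\; \int_0^\infty 8r\, g(r)\, \d r,
\]
which can either be justified by Fubini on $\{(\xi,r) : |\xi|_\infty \le r\}$ or simply by splitting $\R^2$ into the four symmetric quadrants and computing in Cartesian coordinates.

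Applying this with $g(r) = 1_{r \le b}\, r^{2k}$ yields
\[
\int_{\R^2} 1_{|\xi|_\infty \le b}\, |\xi|_\infty^{2k}\, \d \xi \;=\; \int_0^b 8 r^{2k+1}\, \d r \;=\; \frac{8\, b^{2k+2}}{2k+2} \;=\; \frac{4 b^{2k+2}}{k+1},
\]
which is the desired identity. There is no real obstacle; the only point to be careful about is that the relevant surface measure of the $\ell^\infty$-sphere of radius $r$ in $\R^2$ is $8r$ (the perimeter of a square of side $2r$), not the more familiar Euclidean $2\pi r$.
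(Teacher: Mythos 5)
Your computation is correct and gives the right answer. The route is slightly different from the paper's: you invoke the layer-cake/coarea formula for the $\ell^\infty$-norm, using that the $\ell^\infty$-ball of radius $r$ in $\R^2$ has Lebesgue measure $4r^2$ and hence ``perimeter'' $8r$, whereas the paper reduces by symmetry to the region $\{0 \le \xi^{(2)} \le \xi^{(1)} \le b\}$ (picking up a factor of $8$) and then performs the iterated Cartesian integral $\int_0^b (\xi^{(1)})^{2k} \int_0^{\xi^{(1)}} \d\xi^{(2)}\, \d\xi^{(1)}$. These are two phrasings of the same one-line calculation; you even mention the Cartesian-quadrant decomposition as an alternative justification. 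The coarea version is perhaps marginally more conceptual and generalizes more transparently to $\R^d$, while the paper's version is entirely self-contained and avoids any appeal to the layer-cake principle. Either is perfectly acceptable here.
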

\begin{proof}
We may assume $\xi_0=o$ and put $\xi=(\xi^{(1)},\xi^{(2)})$. Then, by symmetry,
\begin{align*}
\int 1_{|\xi|_\infty\le b}|\xi|_\infty^{2k} \d \xi&=4\int_0^b\int_0^b\max\{\xi^{(1)},\xi^{(2)}\}^{2k}\d\xi^{(2)} \d\xi^{(1)}\\
&=8\int_0^b(\xi^{(1)})^{2k}\int_0^{\xi^{(1)}}\d\xi^{(2)}\d\xi^{(1)}\\
&=8\int_0^b(\xi^{(1)})^{2k+1}\d\xi^{(1)}\\
&={8}b^{2k+2}/({2k+2}).&\hspace{6.4cm}\qedhere
\end{align*}
\end{proof}

Using Lemma~\ref{intCompLem}, we can bound the probability of seeing long descending chains.

\begin{lemma}
\label{descChainCompLem}
Let $n\ge1$, $b,s>0$ and let $X$ be a homogeneous Poisson point process in $\R^2$ with intensity $\lambda>0$. Denote by $A^{(2)}_{s,b,n}$ the event that \te pairwise distinct $X_0,\ldots,X_n\in X$, with $X_0\in Q_s(o)$ and \st $\{X_i\}_{0\le i\le n}$ forms a $b$-bounded anisotropic descending chain. Then, $\P\big(A^{(2)}_{s,b,n}\big)\le{s^2(4b^2)^n\lambda^{n+1}}/n!$.
\end{lemma}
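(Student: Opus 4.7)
The plan is to bound $\P(A^{(6)}_{s,b,n})$ via Markov's inequality applied to the number $N$ of ordered $(n+1)$-tuples of distinct points of $X$ forming a $b$-bounded anisotropic descending chain with first entry in $Q_s(o)$, and then to evaluate $\E N$ by the multivariate Mecke (Slivnyak--Mecke) formula combined with iterated applications of Lemma~\ref{intCompLem}.

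First, since $A^{(6)}_{s,b,n}\subset\{N\ge 1\}$, the Markov inequality gives $\P(A^{(6)}_{s,b,n})\le \E N$. The multivariate Mecke formula for the homogeneous Poisson point process of intensity $\lambda$ then yields
\[
\E N=\lambda^{n+1}\int_{Q_s(o)}\int_{(\R^2)^n} 1_{\lver \xi_0-\xi_1\rver_\infty\le b}\prod_{i=2}^n 1_{\lver \xi_{i-1}-\xi_i\rver_\infty<\lver \xi_{i-2}-\xi_{i-1}\rver_\infty}\,\d\xi_n\cdots\d\xi_1\,\d\xi_0.
\]

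Next, I would evaluate this iterated integral from the inside out. Let $r_i=\lver \xi_{i-1}-\xi_i\rver_\infty$ and write $F_j$ for the value of the integral after $\xi_j,\ldots,\xi_n$ have been integrated with $\xi_0,\ldots,\xi_{j-1}$ held fixed. The base step gives $F_n=\int 1_{r_n<r_{n-1}}\,\d\xi_n=4r_{n-1}^2$, and then by induction, applying Lemma~\ref{intCompLem} with $k=n-j$ to the integration variable $\xi_j$ around $\xi_{j-1}$ up to radius $r_{j-1}$, one obtains
\[
F_j=\frac{4^{n-j+1}}{(n-j+1)!}\, r_{j-1}^{2(n-j+1)}\qquad\text{for every }2\le j\le n.
\]

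Finally, integrating over $\xi_1$ with the constraint $r_1\le b$ and invoking Lemma~\ref{intCompLem} once more (with $k=n-1$ and radius $b$) gives $F_1=\frac{4^{n-1}}{(n-1)!}\cdot\frac{4b^{2n}}{n}=\frac{(4b^2)^n}{n!}$, while the outermost integration over $\xi_0\in Q_s(o)$ simply contributes a factor $s^2$. Assembling these pieces yields $\E N=s^2(4b^2)^n\lambda^{n+1}/n!$, which is the claimed bound. I do not foresee any conceptual obstacle here: the only delicate point is the correct bookkeeping of the combinatorial factor generated by the recursive use of Lemma~\ref{intCompLem}, which is purely mechanical.
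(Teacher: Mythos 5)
Your proposal is correct and follows essentially the same route as the paper: bound $\P(A^{(6)}_{s,b,n})$ by $\E N$ (Markov), express $\E N$ via the multivariate Mecke/Campbell formula, and peel off the innermost integrals iteratively using Lemma~\ref{intCompLem}; your explicit induction on $F_j$ just spells out the ``$=\cdots=$'' in the paper's display.
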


\begin{proof}
Denote by $N$ the number of $(n+1)$-tuples of pairwise distinct elements $X_0,\ldots,X_n\in X$  \st $X_0\in Q_s(o)$ and $\{X_i\}_{0\le i\le n}$ forms a $b$-bounded anisotropic descending chain. Then, using Lemma~\ref{intCompLem} and the Campbell formula,
\begin{align*}
\E N&\le\lambda^{n+1} \int \cdots\int 1_{\xi_0\in Q_s(o)}1_{b\ge \left|\xi_0-\xi_1\right|_\infty\ge\cdots\ge\left|\xi_{n-1}-\xi_n\right|_\infty}\d \xi_n\cdots \d \xi_0\\
&=4\lambda^{n+1}\int \cdots\int  1_{\xi_0\in Q_s(o)}1_{b\ge \left|\xi_0-\xi_1\right|_\infty\ge\cdots\ge\left|\xi_{n-2}-\xi_{n-1}\right|_\infty}\left|\xi_{n-2}-\xi_{n-1}\right|_\infty^{2}\d \xi_{n-1}\cdots \d \xi_0\\
&=\cdots\\
&=\frac{\(4b^2\)^n\lambda^{n+1}}{n!}\int  1_{\xi_0\in Q_s(o)}\d \xi_0=\frac{s^2\(4b^2\)^n\lambda^{n+1}}{n!}.&\hspace{1.6cm}\qedhere
\end{align*}
\end{proof}
Lemma~\ref{descChainCompLem} implies the absence of infinite anisotropic descending chains under Poisson assumptions.
\begin{corollary}
Let $X$ be an independently and uniformly $\M$-marked homogeneous Poisson point process in $\R^2$. Then, $\P(X\in\N^\p)=1$. 
\end{corollary}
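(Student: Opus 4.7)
The plan is to verify three conditions in the definition of $\N^\p$ beyond non-emptiness, which holds a.s.\ for a homogeneous Poisson process: absence of infinite anisotropic descending chains, the generic-direction property for pairs of points, and finiteness of $f_X(x)$ at every $x\in X$. The first two follow from the tools already developed together with standard Campbell-type calculations; the third is the real work.

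For the descending-chain condition I would apply Lemma~\ref{descChainCompLem}: for fixed integers $b,s\ge 1$ the bound $\P(A^{(6)}_{s,b,n})\le s^2(4b^2)^n\lambda^{n+1}/n!$ tends to $0$ as $n\to\infty$, so a.s.\ no infinite $b$-bounded anisotropic descending chain has its first point in $Q_s(o)$. Taking the countable union over $b,s\in\N$ and noting that every infinite anisotropic descending chain is $b$-bounded for some integer $b$ and begins in some $Q_s(o)$ yields the claim. For the generic-direction property, the refined Campbell theorem gives the expected number of offending pairs as $\lambda^2\int_{\R^2}\int_{\R^2}\mathbf{1}\{(\xi-\eta)/|\xi-\eta|\in D\}\,\d\xi\,\d\eta$, where $D$ is the $8$-element set of exceptional unit directions; the indicator is supported on a finite union of three-dimensional subsets of $\R^4$ of the form $\{\xi-\eta\in\R_{>0}\cdot d\}$, so the four-dimensional integral vanishes (independence of the marks plays no role).

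The main obstacle is showing $f_X(x)<\infty$ for every $x\in X$. By the Palm--Mecke formula and the translation and reflection symmetries of the Poisson process, it suffices to prove that after adding a deterministic germ at the origin in direction $e_1$ under the Palm distribution, the probability $\P(f_{X\cup\{(o,e_1)\}}((o,e_1))=\infty)$ vanishes. On this event the ray $R=[0,\infty)\times\{0\}$ is never blocked. A direct analysis of the dynamical growth protocol shows that any germ $y=(\eta,\pm e_2)\in X$ whose free-growth path would meet $R$ at a position lying strictly ahead of where the endpoint of $(o,e_1)$ has grown at the same time must, in order to preserve $f(o,e_1)=\infty$, be stopped short of $R$; otherwise $y$'s already-grown portion would be in place when $x$'s endpoint arrived at the crossing point and would stop $x$ there. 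Since almost surely infinitely many such ``potential blockers'' $y$ with $\eta_1>|\eta_2|$ and $|\eta_2|$ bounded exist in $X$, each forced to satisfy $f_X(y)<|\eta_2|$, one iterates the stopping-neighbor relation starting from each $y$ and invokes Lemma~\ref{descChainDiffLem2} applied to a finite-volume approximation of $X$ to extract arbitrarily long anisotropic descending chains, contradicting the previous paragraph.

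The delicate point, which I expect to be the main technical obstacle, is that a single step of the stopping-neighbor relation does not automatically yield a strictly decreasing $|\cdot|_\infty$-jump. This has to be handled by grouping consecutive cascades of stoppings and exploiting the axis-parallel geometry of the four allowed directions to reorganize them into a single genuine anisotropic descending chain in the sense of Definition~\ref{aniDescChainDef}, which is the technical heart of the argument.
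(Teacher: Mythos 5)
You correctly identify the three requirements for membership in $\N^\p$ and handle the first two exactly as the paper intends: Lemma~\ref{descChainCompLem} combined with a countable union over $(b,s)$ rules out infinite anisotropic descending chains, and the generic-direction constraint is a routine Campbell calculation. In fact you are being more careful than the paper here, which only signals the descending-chain part (``In particular, we deduce the absence of infinite anisotropic descending chains\ldots'') and leaves the other two conditions implicit.

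The gap is in the third condition, $f_X(x)<\infty$, and you flag it yourself. The race-condition observation is sound -- if the segment from $(o,e_1)$ grows forever then every germ $y=(\eta,\pm e_2)$ whose free ray would reach the positive $x$-axis before the origin's segment arrives there is forced to have $f_X(y)<\lver\eta_2\rver$ -- but the step from infinitely many forced-short blockers to an infinite anisotropic descending chain does not go through as written. The stopping-neighbor relation yields $\lver\hge(X,y)-\zeta\rver<f_X(y)$ for the stopper $(\zeta,\cdot)=\hco(X,y)$, not a strictly decreasing $\lver\cdot\rver_\infty$-jump between germ locations, and Lemma~\ref{descChainDiffLem2} compares $f$-values of a single common point across two different configurations; it does not manufacture a descending chain from a cascade of stoppers within one configuration. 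The ``grouping cascades of stoppings'' step you mention is a genuine missing argument, not bookkeeping. The shorter standard route, which the paper is presumably deferring to the lilypond literature it cites, is ergodicity plus hard-core: if $f_X(x)=\infty$ occurred for some $x\in X$ with positive probability, then by ergodicity the stationary point process of germs with unbounded growth would have positive intensity in each of the four directions; a.s.\ one could then select an unstopped rightward ray and an unstopped downward ray whose relative position forces them to cross, contradicting the hard-core property of Corollary~\ref{lilyDef}. I would replace your third paragraph with this argument.
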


\subsection{Proof of Proposition~\ref{stopCond}}
Finally, we verify the uniform stopping condition (US). In order to achieve this goal, it is first of all crucial to note that the configuration of the lilypond model in a given square is determined by the line segments entering this square. To make this more precise, it is convenient to introduce a variant of $\enviIn_{z,s} \varphi$ that also takes into account line segments intersecting the square $Q_{s}(sz)$. Hence, for $s>0$, $z\in\Z^2$ and $\varphi\in\N^\p$ we put
$$\enviInn_{z,s}\(\varphi\) =\{x\in \varphi\setminus Q_{s}^\M(sz): I(\varphi,x)\cap Q_{s}(sz)\ne\es\}.$$
\Fm, also line segments leaving a square will play an important role, \sot for $s>0$, $z\in\Z^2$ and $\varphi\in\N^\p$ we put
$$\enviOut_{z,s}\(\varphi\) =\{x\in \varphi\cap Q_{s}^\M(sz): \hge(\varphi,x)\not\in Q_s(sz)\}.$$
Since the value of $s$ is usually clear from the context, we often write $\enviInn_z\(\varphi\)$ and $\enviOut_z\(\varphi\)$ instead of $\enviInn_{z,s}\(\varphi\)$ and $\enviOut_{z,s}\(\varphi\)$. 
Using these definitions, we now obtain the following auxiliary result.
\begin{lemma}
\label{inAuxLem}
Let $s>0$ and $\varphi^{},\varphi^{\p}\in\N^\p$ be \st 
$\varphi^{}\cap Q^\M_s(o)\cup \enviInn_{o}\({\varphi^{}} \)=\varphi^{\p}\cap Q^\M_s(o)\cup \enviInn_{o}\({\varphi^{\p}}\).$
Then, 
\begin{enumerate}
\item $\hco\({\varphi^{}},x\)=\hco\({\varphi^{\p}},x\)$ \fa $x\in\varphi^{}\cap Q^\M_s(o)\cup\enviInn_{z}\({\varphi^{}}\)$ with $\{\hge({\varphi^{}},x),\hge({\varphi^{\p}},x)\}\cap Q_s(o)\ne\es$, 
\item $\enviInn_{o}\(\varphi^{}\)=\enviIn_o\(\varphi^{}\)$ if and only if $\enviInn_{o}\(\varphi^{\p}\)=\enviIn_o\(\varphi^{\p}\)$,
\item $\enviOut_{o}\(\varphi^{\p}\)=\enviOut_o\(\varphi^{}\)$.
\end{enumerate}
\end{lemma}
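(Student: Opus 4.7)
The plan is to prove (i), which is the substantive part of the lemma, and then derive (ii) and (iii) as short consequences.

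For (i), the naive attempt via Lemma~\ref{descChainDiffLem2} starting at $x$ is insufficient, because the descending chain produced there can exit the common set $S := \varphi^{}\cap Q^\M_s(o)\cup\enviInn_{o}(\varphi^{})=\varphi^{\p}\cap Q^\M_s(o)\cup\enviInn_{o}(\varphi^{\p})$ after finitely many steps through a germ whose segment lies entirely outside $Q_s(o)$, so the absence of infinite anisotropic descending chains does not directly yield a contradiction. I would instead adopt the dynamic viewpoint on the lilypond model, in which each germ grows a unit-speed segment in its prescribed direction until first collision. For $y=(\xi_y,v_y)\in S$ and $t\ge 0$, denote by $P_y(t;\varphi) = [\xi_y,\,\xi_y+\min(f_{\varphi}(y),t)v_y]\cap Q_s(o)$ the portion inside $Q_s(o)$ of $y$'s segment grown by time $t$, and set
\[
T^\ast := \sup\bigl\{t\ge 0 : P_y(t;\varphi^{})=P_y(t;\varphi^{\p}) \text{ for every } y\in S\bigr\}.
\]
The aim is to show $T^\ast=\infty$.

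Suppose for contradiction $T^\ast<\infty$. By continuity of $t\mapsto P_y(t;\cdot)$ (in the Hausdorff sense) the equality persists at $T^\ast$, and the first discrepancy must arise from a stopping event at $T^\ast$ inside $Q_s(o)$; WLOG $f_{\varphi^{}}(y)=T^\ast$ and $\hge(\varphi^{},y)\in Q_s(o)$, with stopping neighbor $z:=\hco(\varphi^{},y)$. Since the open segment of $z$ in $\varphi^{}$ contains $\hge(\varphi^{},y)\in Q_s(o)$, one has $z\in S$ and hence $z\in\varphi^{}\cap\varphi^{\p}$. Writing $s'':=|\hge(\varphi^{},y)-\eta_z|$ one has $s''<\min(f_{\varphi^{}}(z),T^\ast)$; matching of $P_z(\cdot;\varphi^{})$ and $P_z(\cdot;\varphi^{\p})$ at times just below $T^\ast$ then gives $f_{\varphi^{\p}}(z)\ge s''$, and a short sub-argument involving the stopper of $z$ in $\varphi^{\p}$ (which again lies in $S$) together with the hard-core property rules out the degenerate case $f_{\varphi^{\p}}(z)=s''$. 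Hence $\hge(\varphi^{},y)$ lies in the open segment of $z$ in $\varphi^{\p}$. The same matching also forces $f_{\varphi^{\p}}(y)\ge T^\ast$, so $y$'s segment in $\varphi^{\p}$ reaches $\hge(\varphi^{},y)$ exactly at $T^\ast$ and is stopped there by $z$; uniqueness of stopping neighbors (Proposition~\ref{lilyPreDef}(ii)) gives $\hco(\varphi^{\p},y)=z$, because any alternative stopper $z'\ne z$ in $\varphi^{\p}$ would itself lie in $S$ and, by the analogous matching, would place $\hge(\varphi^{},y)$ in the open segments of both $z$ and $z'$ in $\varphi^{}$, contradicting the hard-core property of Proposition~\ref{lilyPreDef}(i). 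Consequently the stopping events agree and the matching persists past $T^\ast$, contradicting its definition. Thus $T^\ast=\infty$, and applied to $x$ (exchanging $\varphi^{}\leftrightarrow\varphi^{\p}$ if only $\hge(\varphi^{\p},x)\in Q_s(o)$ is given) this yields $\hge(\varphi^{},x)=\hge(\varphi^{\p},x)$; a final application of the same hard-core argument to the two stoppers $\hco(\varphi^{},x)$ and $\hco(\varphi^{\p},x)$, both in $S$, then gives $\hco(\varphi^{},x)=\hco(\varphi^{\p},x)$.

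For (ii), assume $\enviInn_o(\varphi^{})=\enviIn_o(\varphi^{})$ and let $x\in\enviInn_o(\varphi^{\p})$. Since $x\notin Q^\M_s(o)$, the hypothesis of the lemma gives $x\in\enviInn_o(\varphi^{})=\enviIn_o(\varphi^{})$, so $\hge(\varphi^{},x)\in Q_s(o)$; part~(i) then yields $\hge(\varphi^{\p},x)=\hge(\varphi^{},x)\in Q_s(o)$, placing $x$ in $\enviIn_o(\varphi^{\p})$. The reverse inclusion $\enviIn_o\subseteq\enviInn_o$ is automatic, and the converse implication is symmetric. For (iii), any $x\in\enviOut_o(\varphi^{})$ lies in $\varphi^{}\cap Q^\M_s(o)\subseteq S$, so $x\in\varphi^{\p}\cap Q^\M_s(o)$; if $\hge(\varphi^{\p},x)\in Q_s(o)$, then (i) would force $\hge(\varphi^{},x)\in Q_s(o)$, contradicting $x\in\enviOut_o(\varphi^{})$, and hence $x\in\enviOut_o(\varphi^{\p})$. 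Equality follows by symmetry. The main obstacle is (i); the subtle technical point throughout is ruling out a spurious alternative stopper $z'\ne z$ at a common collision point, which is the step where the hard-core property of the lilypond model is essential.
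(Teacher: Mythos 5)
Your proof takes a genuinely different route from the paper's. The paper proves part (i) by supposing $f_{\varphi}(x_1)\ne f_{\varphi'}(x_1)$ for some $x_1$ in the set $\varphi''$ of points of $S$ with at least one geometric descendant inside $Q_s(o)$, and then showing that the successor $x_2=\hco(\varphi,x_1)$ built in Lemma~\ref{descChainDiffLem2} again lies in $\varphi''$. Since $\varphi''\subset\varphi\cap\varphi'$, this forces the descending chain of Lemma~\ref{descChainDiffLem2} to be infinite, contradicting $\varphi,\varphi'\in\N'$. Your opening dismissal of that lemma (``the chain can exit $S$ through a germ whose segment lies entirely outside $Q_s(o)$'') is therefore a misjudgment: the specific chain from Lemma~\ref{descChainDiffLem2} cannot exit $\varphi''$ precisely because, at each step, the existence-of-stopping-neighbors property places the collision point inside $Q_s(o)$ and the hypothesis on $\enviInn_o$ forbids one of $\hge(\varphi,x_i),\hge(\varphi',x_i)$ from escaping while the other stays inside. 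That is exactly the content of the paper's first-displayed deduction ``$\hge(\varphi,x_1)\in Q_s(o)$'' and the subsequent convexity argument. So the ``naive'' descending-chain approach is in fact the one the paper uses, after one nontrivial observation.

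Your dynamical $T^\ast$-argument is a reasonable alternative and, in spirit, is a continuous-time version of the same backward-tracing of a first discrepancy. However, as written it carries two gaps that the descending-chain route avoids. First, the paper never establishes the unit-speed dynamical description; Proposition~\ref{lilyPreDef} only gives the fixed-point characterization (hard-core plus stopping neighbors), and Lemma~\ref{descChainDiffLem2} is the tool designed precisely to compare two fixed points. Your argument implicitly grants that the $f$ from Proposition~\ref{lilyPreDef} coincides with the dynamical stopping time, which is a separate (classical but nontrivial) fact. Second, the step ``WLOG $f_{\varphi}(y)=T^\ast$'' and the subsequent ``short sub-argument'' ruling out $f_{\varphi'}(z)=s''$ are not innocuous: you must first exclude the possibility that the first discrepancy is caused by an entry-time event for a germ stopped before reaching $Q_s(o)$ in one configuration (this is ruled out by the $\enviInn_o$ hypothesis, but you do not say so), and the degenerate case $f_{\varphi'}(z)=s''$ is not directly a hard-core violation since $\hge(\varphi',z)$ is the half-open endpoint of $z$'s segment; one needs the inequality $|\hge(\varphi',z)-\eta_{z''}|<f_{\varphi'}(z)$ from the stopping-neighbor property for the actual stopper $z''$ of $z$ in $\varphi'$. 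Both points are fixable, but they are genuine work. In exchange, your approach is arguably more geometric and avoids invoking Lemma~\ref{descChainDiffLem2}; the paper's proof is shorter because that lemma was already built for exactly this kind of two-configuration comparison. Your derivations of (ii) and (iii) from (i) match the paper's.
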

\begin{proof}
For readability, we write $f$, $f^\p$ instead of $f_{\varphi^{}}$, $f_{\varphi^{\p}}$. Put 
$$\varphi^\pp=\lcu x\in \varphi^{}\cap Q^\M_s(o)\cup\enviInn\varphi: \lcu \hge(\varphi^{},x), \hge(\varphi^{\p},x)\rcu \cap Q_s(o)\ne\es \rcu.$$ Our first goal is to show $f(x)=f^\p(x)$ \fa $x\in\varphi^\pp$. Suppose, for the sake of deriving a contradiction, that \tes $x_1\in\varphi^\pp$ with $f(x_1)\ne f^\p(x_1)$, e.g. $f(x_1)<f^\p(x_1)$. By Lemma~\ref{descChainDiffLem2} it suffices to show that for any such $x_1$ we have $x_2\in\varphi^\pp$, where $x_2=\hco(\varphi^{},x_1)$.

First, we assert that $\hge(\varphi^{},x_1)\in Q_s(o)$. Assuming the contrary, we could conclude from $x_1\in\varphi^\pp$ that $\hge(\varphi^{\p},x_1)\in Q_s(o)$. Since $f(x_1)<f^\p(x_1)$, we deduce that $\hge(\varphi,x_1)$ is contained on the line segment connecting $x_1$ and $\hge(\varphi^\p,x_1)$, which is only possible if $x_1\not\in Q^\M_s(o)$. \Ip, $x_1\in\enviInn_{o}(\varphi^{\p})$. However, as $\hge(\varphi^{},x_1)$ is not contained in $Q_s(o)$, we obtain that $x_1\not\in\enviInn_{o}(\varphi^{})$ contradicting our assumption $\enviInn_{o}(\varphi^{})=\enviInn_{o}(\varphi^{\p})$. This proves the assertion, which implies that $x_2\in \varphi^{}\cap Q^\M_s(o)\cup \enviInn_{o}(\varphi^{})$. From the assumption $f(x_1)<f^\p(x_1)$, we then conclude $f^\p(x_2)<f(x_2)$. We claim that $\hge(\varphi^{\p},x_2)\in Q_s(o)$ and assume the contrary for the sake of deriving a contradiction. Then, \wcon from $\hge(\varphi^\p,x_2)\in [x_2,\hge(\varphi^{},x_1)]$ and $\hge(\varphi,x_1)\in [x_2,\hge(\varphi,x_2)]$ that $x_2\not\in Q^\M_s(o)$, $x_2\not\in\enviInn_o(\varphi^\p)$ and $x_2\in\enviInn_o(\varphi)$, contradicting our assumption. This completes the proof of $f(x)=f^\p(x)$ \fa $x\in\varphi^\pp$. Property 2. is an immediate consequence of property 1.
To prove the third claim, let $x\in \varphi^{}\cap Q^\M_s(o)$. If $x\not\in\enviOut_o(\varphi)$, then $\hge(\varphi,x)\in Q_s(o)$ and therefore also $\hge(x,\varphi^\p)=\hge(x,\varphi)\in Q_s(o)$. In other words, $\varphi\cap Q^\M_s(o)\setminus \enviOut_o(\varphi)\subset\varphi^{\p}\cap Q^\M_s(o)\setminus \enviOut_o\(\varphi^\p\)$ and the other inclusion follows by symmetry. 
\end{proof}

Using Lemmas~\ref{extInfLem} and~\ref{descChainDiffLem2}, we show that the set $\enviInn_{o,s} (X)$ stabilizes with high probability.
\begin{lemma}
\label{extStab}
There exists a family of events $\big(A_s^{(3)}\big)_{s\ge1}$ \st 
$$\lim_{s\to\infty}\P\big(X^{(1)}\cap Q^\M_{3s}(o)\in A_s^{(3)}\big)=1$$
 and \st if $\varphi\in\N^\p$ is \st $\varphi\cap Q^\M_{3s}(o)\in A_s^{(3)}$, then $\enviInn_o \(\varphi\) =\enviInn_o\(\varphi\cup\psi\)$ \fa locally finite $\psi\subset\R^{2,\M}\setminus Q^\M_{3s}(o)$ with $\varphi\cup \psi \in \N^\p$.
\end{lemma}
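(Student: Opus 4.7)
The plan is to define $A_s^{(7)}$ as the intersection of two sub-events: the event $A^{(5,\alpha)}_{3s}$ from Lemma~\ref{extInfLem} for some small fixed $\alpha\in(0,1/3)$ (for concreteness $\alpha=1/4$); and the event that the configuration contains no $(3s)^\alpha$-bounded anisotropic descending chain of length $n_s:=\lfloor s^{1-\alpha}/(2\cdot 3^\alpha)\rfloor$ starting at a point of $Q_{2s}(o)$. The first component has probability tending to $1$ by Lemma~\ref{extInfLem}. The second is controlled via Lemma~\ref{descChainCompLem} with $b=(3s)^\alpha$ and $n=n_s-1$, which yields a bound of the form $(2s)^2(Cs^{2\alpha})^{n_s-1}\lambda^{n_s}/(n_s-1)!$; by Stirling this decays super-exponentially in $s$ as soon as $3\alpha<1$. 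Moreover, every $(3s)^\alpha$-bounded chain of length $n_s$ starting in $Q_{2s}(o)$ has $\ell_\infty$-span at most $(n_s-1)(3s)^\alpha\le s/2$ and hence stays inside $Q_{3s}(o)$, so $A_s^{(7)}$ is indeed measurable with respect to $\varphi\cap Q^\M_{3s}(o)$.

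Given $\varphi\cap Q^\M_{3s}(o)\in A_s^{(7)}$ and a locally finite $\psi\subset\R^{2,\M}\setminus Q^\M_{3s}(o)$ with $\varphi\cup\psi\in\N^\p$, I would first apply Lemma~\ref{extInfLem} at scale $3s$ both to $\varphi$ (with outer part $\varphi\setminus Q^\M_{3s}(o)$) and to $\varphi\cup\psi$ (with outer part $(\varphi\setminus Q^\M_{3s}(o))\cup\psi$). This yields that every segment of either $\varphi$ or $\varphi\cup\psi$ meets $Q_{3s}(o)$ in a set of $\nu_1$-length at most $(3s)^\alpha$. A segment starting outside $Q_{3s}(o)$ and reaching $Q_s(o)$ would traverse a strip of width $s$ inside $Q_{3s}(o)$, violating this bound; similarly, a segment starting at $\xi\in Q_{3s}(o)\setminus Q_{2s}(o)$ and reaching $Q_s(o)$ would stay in $Q_{3s}(o)$ and span at least $s/2$, again a contradiction. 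Hence $\enviInn_o(\varphi)$ and $\enviInn_o(\varphi\cup\psi)$ are both contained in the common set $\{x=(\xi,v)\in\varphi:\xi\in Q_{2s}(o)\setminus Q_s(o)\}$.

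To conclude I will show $f_\varphi(x)=f_{\varphi\cup\psi}(x)$ for every such $x$, which forces $I(\varphi,x)=I(\varphi\cup\psi,x)$ and hence equality of the two in-environments. Assuming the contrary, Lemma~\ref{descChainDiffLem2}, applied with the smaller-$f$ configuration in the role of $\varphi^{}$, produces an anisotropic descending chain $x_1=x,\ldots,x_{i_0}$ with $x_{i_0}\in\psi$. A further use of Lemma~\ref{extInfLem} as above forces $f_\varphi(x),f_{\varphi\cup\psi}(x)\le(3s)^\alpha$, so every step of the chain has $\ell_\infty$-length at most $(3s)^\alpha$. Since $\xi_1\in Q_{2s}(o)$ and $\xi_{i_0}\not\in Q_{3s}(o)$, the telescoping estimate gives $i_0-1\ge(s/2)/(3s)^\alpha\ge n_s$ for $s$ large, so the initial sub-chain $x_1,\ldots,x_{n_s}$ lies entirely in $\varphi$ (because $x_{i_0}$ is the first index leaving $\varphi$), starts in $Q_{2s}(o)$, and has step bound $(3s)^\alpha$, contradicting the second clause of $A_s^{(7)}$. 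The principal difficulty is the simultaneous calibration of $\alpha$ and $n_s$: the Poisson bound from Lemma~\ref{descChainCompLem} pushes $\alpha$ small, while the telescoping argument and the measurability requirement pin $n_s$ to order $s^{1-\alpha}$; the choice $\alpha=1/4$, $n_s\asymp s^{3/4}$ balances these conflicting requirements.
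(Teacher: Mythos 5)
Your proposal is correct and follows essentially the same route as the paper: use Lemma~\ref{extInfLem} to confine the in-environment and bound the initial $f$-values, invoke Lemma~\ref{descChainDiffLem2} to produce a long bounded anisotropic descending chain from any discrepancy, and kill it with Lemma~\ref{descChainCompLem} plus Stirling. The only differences are cosmetic: you apply the shielding event at the single scale $3s$ (the paper uses a two-scale combination of $A^{(5,1/8)}_{s}$ and $A^{(5,1/8)}_{s/3}$), you make the descending-chain clause an explicit part of $A^{(7)}_s$ rather than leaving it implicit, and you pick $\alpha=1/4$ instead of the paper's $1/8$ (both satisfy the required $\alpha<1/3$).
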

\begin{proof}In the proof, we make use of the events $A^{(1,1/8)}_s$ and $A^{(2)}_{s,b,n}$ introduced in Lemmas~\ref{extInfLem} and~\ref{descChainCompLem}, respectively. \Fm, put $S=\{z\in\Z^2:|z|_\infty=2\}$ and denote by 
$$A^{(3^\p)}_s=A^{(1,1/8)}_s\cap \bigcap_{z\in S} \big\{\varphi\in \N_\M: \(\varphi-sz/3\)\cap Q^\M_{s/3}(o) \in A^{(1,1/8)}_{s/3}\big\} $$
the event that $A^{(1,1/8)}_s$ occurs in $Q_s(o)$ intersected with the event that $A^{(1,1/8)}_{s/3}$ occurs in each of the $(s/3)$-squares surrounding $Q_{s}(o)$. Now assume that $\varphi\in\N^\p$ is \st $\varphi\cap Q^\M_{3s}(o)\in A^{(3^\p)}_s$ and that \tes $\psi\subset\R^{2,\M}\setminus Q^\M_{3s}(o)$ with $\varphi\cup \psi\in \N^\p$ and $\enviInn_o \(\varphi\) \ne\enviInn_o\(\varphi\cup\psi\).$ Then, \tes $x\in Q^\M_{2s}(o)$ with $\hco\(\varphi,x\)\ne \hco\(\varphi\cup\psi,x\)$. By Lemma~\ref{descChainDiffLem2}, there exists an $s^{1/4}$-bounded anisotropic descending chain of pairwise distinct elements of $\varphi$ starting in $x$ and ending in $\R^2\setminus Q_{3s}(o)$. \Fm, by our assumption this chain consists of at least $n_s=\lfloor s/(2s^{1/4})\rfloor= \lfloor s^{3/4}/2\rfloor $ hops. Hence, $\varphi\cap Q^\M_{3s}(o) \in A^{(2)}_{2s, s^{1/4},n_s}$  and therefore, we put $A^{(3)}_s=A^{(3^\p)}_s\setminus A^{(2)}_{2s, s^{1/4},n_s}$. By Lemma~\ref{descChainCompLem}, the probability for the occurrence of $X^{(1)}\cap Q^{\M}_{3s}(o)\in A^{(2)}_{2s,s^{1/4},n_s}$ is bounded from above by $4s^2(4s^{1/2})^{n_s}/ {n_s}!$. By Stirling's formula, the latter expression tends to $0$ as $s\to\infty$.
\end{proof}

First, we provide a definition of $A_s$ \st if $\varphi\subset Q^\M_{3s}(o)$ is \st $\varphi\in A_s$, then 
\begin{enumerate}
\item $\varphi$ satisfies the shielding property, i.e., $\varphi\cap Q^\M_{s}(o)\in A^{(1,1/2)}_s,$
\item $\varphi$ satisfies the external stabilization property, i.e., $\varphi\in A^{(3)}_s,$ and
\item the points of $\varphi$ do not come too close to each other and also not too close to the boundary of $Q_s(o)$. 
\end{enumerate}
To be more precise, for $s\ge1$ we put 
$$A_s=A^{(1,1/2)}_s\cap A^{(3)}_s\cap A^{(4)}_s,$$ 
where $A^{(4)}_s$ denotes the family of all $\varphi\in \N_\M$ \st $\varphi\subset Q^\M_{3s}(o)$ and $\varphi$ is \emph{$s^{-4}$-separated}, i.e., $|\pi_k(\xi)-\pi_k(\eta)|\ge s^{-4}$ \fa $k\in\{1,2\}$ and all
$$x=(\xi,v),y=(\eta,w)\in (\varphi\cap Q^\M_{3s}(o))\cup\big(\{\pm(s/2,s/2)\}\times \M\big)\text{ with }x\ne y,$$
where we recall that $\pi_k(\xi)$, $k\in\{1,2\}$ denotes the $k$th coordinate of $\xi$. Note that the set $\{\pm(s/2,s/2)\}\times \M$ is added, since it is important to have some room at the boundary of $Q_s(o)$, where we can add sprinklling used to stop incoming segments.
Taking into account Lemmas~\ref{extInfLem} and~\ref{extStab}, in order to show that $\lim_{s\to\infty}\P(X^{(1)}\cap Q^\M_{3s}(o)\in A_s)=1$ it suffices to prove $\lim_{s\to\infty}\P\big(X^{(1)}\cap Q^\M_{3s}(o)\in A^{(4)}_s\big)=1$, which is achieved in the following result.
\begin{lemma}
As $s\to\infty$ the probability $\P\big(X^{(1)}\cap Q^\M_{3s}(o)\in A^{(4)}_s\big)$ tends to $1$. 
\end{lemma}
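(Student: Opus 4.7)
The plan is to exhaust the finite list of forbidden coincidence events that define $A^{(8)}_s$ and estimate each by a first-moment computation based on the Campbell--Slivnyak--Mecke formula applied to $X^{(1)}$. Since $X^{(1)}$ has bounded intensity $\lambda(1-s^{-3})\le\lambda$, the constants below are uniform in $s$.

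First I would bound the probability of a separation violation between two distinct points of $X^{(1)}\cap Q^\M_{3s}(o)$. For each $k\in\{1,2\}$, an application of the multivariate Campbell formula gives
\begin{align*}
\E\sum_{\substack{x,y\in X^{(1)}\cap Q^\M_{3s}(o)\\ x\ne y}} 1_{\lver \xi^{(k)}-\eta^{(k)}\rver<s^{-4}}
&\le \lambda^2\int_{Q_{3s}(o)}\int_{Q_{3s}(o)} 1_{\lver \xi^{(k)}-\eta^{(k)}\rver<s^{-4}}\, \d\eta\, \d\xi\\
&\le \lambda^2\cdot 9s^2\cdot (2s^{-4}\cdot 3s)=54\lambda^2 s^{-1},
\end{align*}
which tends to $0$, and the expected number of equal-coordinate pairs drops out after multiplying by the mark weights (a constant factor).

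Next I would bound the probability that some point of $X^{(1)}\cap Q^\M_{3s}(o)$ has a coordinate within $s^{-4}$ of one of the finitely many boundary values associated with the anchor set $(\frac{s}{2}\M)\times\M$; these boundary values contribute at most a constant number $c$ of forbidden hyperplanes (independent of $s$) in each coordinate direction. For each such hyperplane, Campbell's formula yields
$$\E\#\lcu x\in X^{(1)}\cap Q^\M_{3s}(o): \lver \xi^{(k)}-c_0\rver<s^{-4}\rcu\le \lambda\cdot 2s^{-4}\cdot 3s=6\lambda s^{-3},$$
which again tends to $0$. Summing over the constantly many hyperplanes and over $k\in\{1,2\}$ keeps the bound of order $s^{-3}$.

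Finally I would combine these estimates via Markov's inequality and a union bound over the two coordinate directions and the two violation types (pair separation and boundary separation), obtaining
$$\P\(X^{(1)}\cap Q^\M_{3s}(o)\notin A^{(8)}_s\)=O\(s^{-1}\)\xrightarrow[s\to\infty]{}0,$$
which proves the claim. No genuine obstacle arises here beyond careful bookkeeping: the step with the largest contribution to the bound is the pair separation estimate, since its decay rate $s^{-1}$ is slower than the boundary $s^{-3}$; all other refinements are handled by replacing $s^{-4}$ with $s^{-4}$ times a bounded combinatorial factor from the mark space $\M$.
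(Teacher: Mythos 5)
Your proposal is correct and follows essentially the same first-moment strategy as the paper: a Campbell/Slivnyak--Mecke expectation bound of order $s^{-1}$ for the pair-separation events and of order $s^{-3}$ for the point-to-anchor separation events, combined via Markov's inequality and a union bound. (Your treatment of the anchor set as "finitely many hyperplanes" is in fact slightly more careful than the paper's, which explicitly names only $\zeta=\pm s/2$ and leaves the $\zeta=0$ coordinate of $(\tfrac{s}{2}\M)\times\M$ implicit, but the orders of magnitude and the method are identical.)
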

\begin{proof}
The expected number of distinct points $x=(\xi,v)$, $y=(\eta,w)\in X^{(1)}\cap Q^\M_{3s}(o)$ satisfying $|\pi_k(\xi)-\pi_k(\eta)|\le s^{-4}$ for some $k\in\{1,2\}$ is of order at most $s^3\cdot s^{-4}$ and therefore tends to $0$ as $s\to\infty$. Similarly, the expected number of $x=(\xi,v)\in X^{(1)}\cap Q^\M_{3s}(o)$ with $|\pi_k(\xi)-\zeta|\le s^{-4}$ for some $k\in\{1,2\}$ and $\zeta=\pm s/2$ is of order $s\cdot s^{-4}$, \sot it also tends to $0$ as $s\to\infty$. 
\end{proof}

The next step is to introduce the family of events $\(A^\p_s\)_{s\ge1}$, i.e., to define suitable sprinkling configurations. Here, small four-cycles play an important role.
\begin{definition}
\label{4cycle}
Let $\delta>0$ and $\xi\in\R^2$. We say that $D=\{ x_{1},\ldots,x_{4}\}=\{(\xi_{1},v_1),\ldots,(\xi_4,v_{4})\}\subset Q^{\M}_\delta(\xi)$ forms a \emph{$(\xi,\delta)$-cycle} if the following conditions are satisfied, where we put formally $x_5=x_1$ and $v_5=v_1$.
\begin{enumerate}
\item $v_j=\rho_{\pi/2}\(v_{j+1}\)$ \fa $j\in\{1,\ldots,4\}$,
 where $\rho_{\pi/2}:\R^2\to\R^2$ denotes rotation by $\pi/2$,
\item $\hge\(D,x_j\)\in Q_{\delta}(\xi)$ \fa $j\in\{1,\ldots,4\}$,
\item $\hge\(D,x_j\)\in I\(D,x_{j+1}\)$ \fa $j\in\{1,\ldots,4\}$, 
\item $\min\(\langle \xi-\xi_{j}, v_j\rangle,\langle \xi-\xi_{j}, v_{j+1}\rangle\)>0$ \fa $j\in\{1,\ldots,4\}$.
\end{enumerate}
The fourth condition ensures that $\xi$ belongs to the inner part delimited by the $(\xi,\delta)$-cycle. \Ip, if we consider a line segment starting from $\R^2\setminus Q_\delta(\xi)$ and whose corresponding ray contains $\xi$, then this ray must hit the cycle. See Figure~\ref{4cycleFig} for an illustration of a $(\xi,\delta)$-cycle.
\end{definition}
\begin{figure}[!htbp]
\centering
\begin{tikzpicture}
\draw[step=1cm,gray,dashed] (0,0) grid (5,5);
\draw (0,0) rectangle (5,5);
\fill[black] (3.1,4.3) circle (2pt);
\fill[black] (4.7,1.7) circle (2pt);
\fill[black] (1.9,0.7) circle (2pt);
\fill[black] (0.3,3.2) circle (2pt);
\fill[black] (2.5,2.5) circle (2pt);
\draw[->] (3.1,4.3)--(3.1,1.7);
\draw[->] (4.7,1.7)--(1.9,1.7);
\draw[->] (1.9,0.7)--(1.9,3.2);
\draw[->] (0.3,3.2)--(3.1,3.2);
\coordinate[label=right:$\xi$] (u) at (2.5,2.5);
\draw[decorate,decoration=brace] (5.1,5.0)--(5.1,0.0);
\coordinate[label=right:$\delta$] (u) at (5.1,2.5);
\end{tikzpicture}
\caption{Example of a $(\xi,\delta)$-cycle}
\label{4cycleFig}
\end{figure}
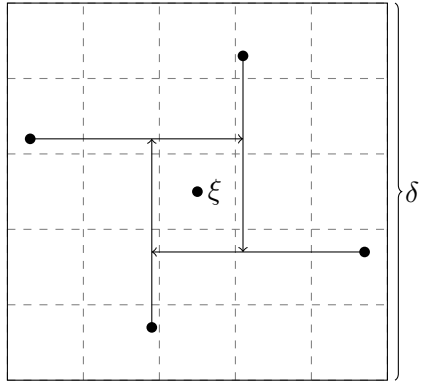
An important feature of $(\xi,\delta)$-cycles is the following strong external stabilization property.
\begin{lemma}
\label{stabCycle}
Let $\delta>0$, $\xi\in\R^2$ and $\varphi\in \N^\p$ be \st $\varphi\cap Q^\M_{3\delta}(\xi) =\es$. \Fm, let $D=\{x_1,\ldots,x_4\}=\{(\xi_1,v_1),\ldots,(\xi_4,v_4)\}\subset\R^{2,\M}$ be a $(\xi,\delta)$-cycle. Then, $\hco\({D\cup\varphi},x_i\)=\hco\(D,x_i\)$ \fa $i\in\{1,\ldots,4\}$. 
\end{lemma}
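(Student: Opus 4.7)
The plan is to argue by contradiction, combining the descending-chain machinery of Lemma~\ref{descChainDiffLem2} with the tight geometric control afforded by the cycle. Assume that some $x_j\in D$ has $\hco(D\cup\varphi,x_j)\ne\hco(D,x_j)$. Inclusion monotonicity (adding points can only shorten a segment) yields $f_{D\cup\varphi}(x_j)\le f_D(x_j)$, and since both $\xi_j$ and $\hge(D,x_j)$ lie in the convex set $Q_\delta(\xi)$ while $v_j\in\{\pm e_1,\pm e_2\}$ is axis-parallel, one has $f_D(x_j)=\lver\xi_j-\hge(D,x_j)\rver_\infty\le\delta$. I then split according to whether this latest inequality is strict.

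In the strict case $f_{D\cup\varphi}(x_j)<f_D(x_j)$, I would apply Lemma~\ref{descChainDiffLem2} with $\varphi^{}=D\cup\varphi$ and $\varphi^{\p}=D$. This produces an $f_{D\cup\varphi}(x_j)$-bounded anisotropic descending chain of pairwise distinct elements $x_1=x_j,x_2,\ldots,x_{i_0}$ with $x_{i_0-1}\in(D\cup\varphi)\cap D=D$ and $x_{i_0}\in(D\cup\varphi)\setminus D=\varphi$. Boundedness together with $f_{D\cup\varphi}(x_j)\le\delta$ forces $\lver\xi_{i_0-1}-\xi_{i_0}\rver_\infty\le\delta$. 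However $\xi_{i_0-1}\in Q_\delta(\xi)$ gives $\lver\xi_{i_0-1}-\xi\rver_\infty\le\delta/2$ while $\xi_{i_0}\not\in Q_{3\delta}(\xi)$ gives $\lver\xi_{i_0}-\xi\rver_\infty>3\delta/2$, and the triangle inequality then yields the contradictory estimate $\lver\xi_{i_0-1}-\xi_{i_0}\rver_\infty>\delta$.

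In the equal-$f$ case the stopping tips $\hge(D\cup\varphi,x_j)=\xi_j+f_D(x_j)v_j=\hge(D,x_j)$ coincide. Writing $y=(\eta,w)=\hco(D\cup\varphi,x_j)$, the stopping-neighbor clause of Proposition~\ref{lilyPreDef} gives $\lver\hge(D,x_j)-\eta\rver<f_D(x_j)\le\delta$. If $y\in D$, then monotonicity $f_{D\cup\varphi}(y)\le f_D(y)$ shows that $y$ still satisfies both defining properties of the stopping neighbor inside the subconfiguration $D$, so the uniqueness clause of Proposition~\ref{lilyPreDef} forces $y=\hco(D,x_j)$, contradicting our assumption. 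Hence $y\in\varphi$. Using $\hge(D,x_j)\in Q_\delta(\xi)$ together with the norm comparison $\lver\cdot\rver_\infty\le\lver\cdot\rver$ yields $\lver\eta-\xi\rver_\infty<\delta+\delta/2=3\delta/2$, placing $y\in Q^\M_{3\delta}(\xi)$ and contradicting $\varphi\cap Q^\M_{3\delta}(\xi)=\es$.

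The main subtlety arises in the equal-$f$ case: one must argue that the stopping neighbor is the same across the two configurations even when $f$ is unchanged. This is handled by verifying the defining properties of Proposition~\ref{lilyPreDef} directly for the candidate $y$, invoking monotonicity in the $y\in D$ branch and the $3\delta/2$ geometric estimate in the $y\in\varphi$ branch.
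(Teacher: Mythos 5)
There is a genuine gap at the very first step. You assert an ``inclusion monotonicity'' principle — that adding points to the configuration can only shorten segments, so $f_{D\cup\varphi}(x_j)\le f_D(x_j)$ for the $x_j$ with a changed descendant. This is \emph{not} a valid property of the lilypond model. In the asymmetric stopping protocol (the segment that hits stops; the segment hit does not), adding a germ $z$ that stops $B$'s growth before $B$ reaches the point where $A$ used to hit it will make $A$'s segment \emph{longer}, not shorter. The lilypond fixed point is not monotone under insertion, and you invoke this false principle twice (once to set up the dichotomy strict/equal, and again inside the $y\in D$ branch of the equal-$f$ case). So as written the proof does not go through: nothing prevents the remaining case $f_{D\cup\varphi}(x_j)>f_D(x_j)$.

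The paper avoids monotonicity entirely. It observes that the hard-core property together with the 4-cycle structure of $D$ (each $\hge(D,x_j)$ lies strictly inside $I(D,x_{j+1})$) forces the existence of \emph{some} index $j_1$ with $f_{D\cup\varphi}(x_{j_1})<f_D(x_{j_1})$: if all four $f$'s were $\ge$, a strict increase would put an old tip in the interior of two open segments in $D\cup\varphi$, while all-equal would leave every stopping neighbor unchanged by uniqueness. Once you have such a $j_1$, your descending-chain argument (Case~1) is exactly the right tool and in fact repairs a small imprecision in the paper's own finish: the paper's direct estimate only places $\hco(D\cup\varphi,x_{j_1})$ inside $Q^\M_{3\delta}(\xi)$, which by itself does not rule out a descendant in $D$; your chain from Lemma~\ref{descChainDiffLem2} terminates at a point $x_{i_0}\in\varphi$ with $\lver\xi_{i_0-1}-\xi_{i_0}\rver_\infty<\delta$, while $\xi_{i_0-1}\in Q_\delta(\xi)$ and $\xi_{i_0}\notin Q_{3\delta}(\xi)$ force $\lver\xi_{i_0-1}-\xi_{i_0}\rver_\infty>\delta$, a clean contradiction. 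So the fix is: delete the monotonicity claim, replace it by the hard-core/cycle observation to produce a $j_1$ with strict decrease, then run your Case~1 chain argument on $x_{j_1}$; the equal-$f$ Case~2 becomes superfluous.
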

\begin{proof}
Suppose \tes $i\in\{1,\ldots,4\}$ with $\hco\(D\cup\varphi,x_i\)\ne \hco\(D,x_i\)$. By the hard-core property we see that we cannot have $f_{D\cup\varphi}(x_j)\ge f_D(x_j)$ \fa $j\in\{1,\ldots,4\}$ and we choose $j_1\in\{1,\ldots,4\}$ with $f_{D\cup\varphi}(x_{j_1})<f_{D}(x_{j_1})$. Since all segments grow at the same speed (which is equal to 1), we deduce that
\begin{align*}
|\eta-\xi|_\infty&\le |\eta-\xi_{j_1}|_\infty+|\xi_{j_1}-\xi|_\infty\le f_{D\cup\varphi}(x_{j_1}) +\delta/2\le 2\delta,
\end{align*}
where $(\eta,w)=\hco\({D\cup\varphi},x_{j_1}\)$. Thus, $\hco\({D\cup\varphi},x_{j_1}\)\in Q^\M_{3\delta}(\xi)$, violating the assumption $\varphi\cap Q^\M_{3\delta}(\xi)=\es$.
\end{proof}
The notion of $(\xi,\delta)$-cycles can be used to define configurations $X^{(2)}\cap Q^\M_s(o)$ satisfying the relation $\# \hco^{(\infty)}\({X^{(1)}\cup X^{(2)}\cap Q^\M_s(o)},x\)<\infty$ \fa $x\in\enviInn_o\({X^{(1)}}\)$. These cycles are used to stop all segments intersecting $Q_s(o)$ except for those leaving the square.
More precisely, we make the following definition, where for $\psi\in\N^\p$ we write $\psi\in A^{*}_{\xi,\delta}$ if the configuration of $\psi\cap Q_\delta(\xi)$ consists precisely of one $(\xi,\delta)$-cycle. 
\begin{definition}
\label{deviceDef}
Let $s>0$, $\varphi\in\N_\M$ and $\psi\in\N^\p$ be \st $\varphi\subset Q^\M_{3s}(o)$, $\varphi\in A^{(3)}_s$ and $\psi\subset Q^\M_s(o)$. Let $\varphi^\p\in\N^\p$ be any locally finite set with $\varphi^\p\cap Q^\M_{3s}(o)=\varphi$. Then, we put $\delta=s^{-4}$, 
\begin{align*}
M_1&=\{\xi+{3\delta}v/{8}:(\xi,v)\in \varphi\cap Q^\M_s(o)\setminus\partial^{\mathsf{out}}_o(\varphi^\p)\},\\
M_2&=\{P_{(\xi,v)}+{3\delta}v/{8}:(\xi,v)\in \enviIn_o(\varphi^\p)\},\text{ and }\\
M_3&=\{(-s/2+3\delta/8)(e_1+e_2)\}.
\end{align*}
Here $P_{(\xi,v)}$ denotes the first intersection point of $I(\varphi,(\xi,v))$ and $\partial Q_s(o)$. Note that since $\varphi\in A^{(3)}_s$, the definition of $M_1$, $M_2$ and $M_3$ does not depend on the choice of $\varphi^\p$. 
Then, we define $(\varphi,\psi)\in A^\p_s$ to be the intersection of the events $\lcu \psi\subset\((M_1\cup M_2\cup M_3)\oplus Q_{\delta/16}(o)\)\times\M\rcu$, $\varphi\in A_s$ and
$\psi\in \bigcap_{(\xi,v)\in M_1\cup M_2\cup M_3} A^{*}_{\xi,\delta/16}$.
See Figure~\ref{sprinkleFig} for an illustration of the effect on the lilypond model when adding a set of germs $\psi$ satisfying $(\varphi,\psi)\in A^{\p}_{s}$.
\end{definition}
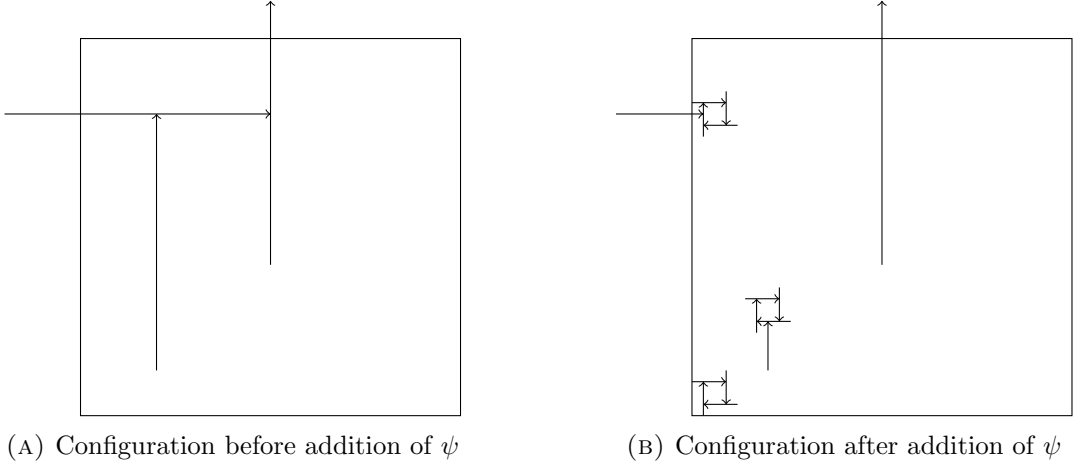
\begin{figure}[!htpb]
     \centering
     \begin{subfigure}[b]{0.5\textwidth}
           \centering
\begin{tikzpicture}
\draw (0,0) rectangle (5,5);
\draw[->] (-1,4)--(2.5,4);
\draw[->] (2.5,2)--(2.5,5.5);
\draw[->] (1,0.6)--(1,4);
\end{tikzpicture}

           \caption{Configuration before addition of $\psi$}
           \label{sprinkleFig1}
     \end{subfigure}%
     ~ 
     \begin{subfigure}[b]{0.5\textwidth}
           \centering
\begin{tikzpicture}
\draw (0,0) rectangle (5,5);
\draw[->] (-1,4)--(0.15,4);
\draw[->] (2.5,2)--(2.5,5.5);
\draw[->] (1,0.6)--(1,1.25);

\draw[->] (0.0,4.15)--(0.45,4.15);
\draw[->] (0.45,4.3)--(0.45,3.85);
\draw[->] (0.6,3.85)--(0.15,3.85);
\draw[->] (0.15,3.7)--(0.15,4.15);

\draw[->] (0.7,1.55)--(1.15,1.55);
\draw[->] (1.15,1.7)--(1.15,1.25);
\draw[->] (1.3,1.25)--(0.85,1.25);
\draw[->] (0.85,1.1)--(0.85,1.55);

\draw[->] (0.0,0.45)--(0.45,0.45);
\draw[->] (0.45,0.6)--(0.45,0.15);
\draw[->] (0.6,0.15)--(0.15,0.15);
\draw[->] (0.15,0.0)--(0.15,0.45);

\end{tikzpicture}

           \caption{Configuration after addition of $\psi$}
           \label{sprinkleFig2}
     \end{subfigure}
\caption{Impact of the addition of $\psi$ with $(\varphi,\psi)\in A^\p_s$}
\label{sprinkleFig}
\end{figure}

\begin{lemma}
\label{posChanceLemLil}
The events $\(A_s^\p\)_{s\ge1}$ introduced in Definition~\emph{\ref{deviceDef}} satisfy condition~\eqref{posChanceLem}.
\end{lemma}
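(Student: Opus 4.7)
The plan is to condition on $X^{(1)}\cap Q^\M_{3s}(o)=\varphi\in A_s$ and construct an explicit positive-probability sprinkling realizing $(\varphi,\psi)\in A^\p_s$ with $\psi:=X^{(2)}\cap Q^\M_s(o)$. Writing $\delta=s^{-4}$ and $C=M_1\cup M_2\cup M_3$ as determined by $\varphi$, the event $A^\p_s$ demands (a) that for every $\xi\in C$ the restriction of $\psi$ to $Q^\M_{\delta/16}(\xi)$ forms a $(\xi,\delta/16)$-cycle, and (b) that $\psi$ has no points in $Q^\M_s(o)\setminus\bigcup_{\xi\in C}Q^\M_{\delta/16}(\xi)$. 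Because $X^{(2)}$ is a homogeneous Poisson process of intensity $s^{-3}$ independent of $X^{(1)}$, its restrictions to pairwise disjoint sub-regions are conditionally independent, so once pairwise disjointness of the boxes $\{Q_{\delta/16}(\xi)\}_{\xi\in C}$ is verified the conditional probability factorises.

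The first step is therefore to establish pairwise disjointness of these boxes together with a deterministic upper bound $|C|\le N_s=O(s^{10})$. For two points of $M_1$, the $s^{-4}$-separation enforced by $A^{(8)}_s$ provides a coordinate gap of at least $\delta$, and the two shifts by $3\delta v_i/8$ reduce it by at most $3\delta/4$, leaving a gap $\ge\delta/4$ which comfortably exceeds the box half-side $\delta/32$. For pairs from $M_2$ a short case analysis on the two directions $(v_1,v_2)$, using the boundary-separation clause of $A^{(8)}_s$ (every $\xi\in\varphi$ is $L^\infty$-separated from $\partial Q_s(o)$ by at least $\delta$), shows that entry points on a common face inherit $\delta$-separation in the direction perpendicular to $v$, while entry points on distinct faces remain separated by $\ge\delta/4$ even in the near-corner case after perturbation. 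Mixed pairs from $M_1,M_2,M_3$ and the corner point of $M_3$ are handled by the same boundary-separation clause. The cardinality bound then follows from a volume count, since $C\subset Q_{3s}(o)$ is $(\delta/4)$-separated.

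The second step is the probability estimate. For each $\xi\in C$ I will partition $Q_{\delta/16}(\xi)$ into four congruent axis-aligned sub-rectangles and prescribe one of the four directions in $\M$ for each, so that the presence of exactly one correctly-marked sprinkled point in each sub-rectangle produces a $(\xi,\delta/16)$-cycle as in Definition~\ref{4cycle}; a direct Poisson calculation gives a deterministic lower bound $q_s>0$ for the probability of this local event. The probability that $X^{(2)}$ avoids the residual region $Q^\M_s(o)\setminus\bigcup_{\xi\in C}Q^\M_{\delta/16}(\xi)$ is at least $\exp(-s^{-3}\nu_2(Q_s(o)))=e^{-s^{-1}}$. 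Multiplying these conditionally independent factors yields the deterministic lower bound $p_s:=q_s^{N_s}e^{-s^{-1}}>0$, uniform over $\varphi\in A_s$, as required. The main obstacle I expect is the case analysis for disjointness in step one, in particular the near-corner configurations of $M_2$; the probability estimate itself is routine once independence on disjoint Poisson regions and positivity of Poisson probabilities on sets of positive Lebesgue measure are invoked.
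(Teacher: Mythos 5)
Your proposal follows essentially the same approach as the paper's own proof: use the $\delta$-separatedness built into $A^{(8)}_s$ (with $\delta=s^{-4}$) to get pairwise disjoint boxes $Q_{\delta/16}(\xi)$ around the centers of $M_1\cup M_2\cup M_3$, bound their number deterministically, invoke independence of the Poisson process $X^{(2)}$ over disjoint regions, and lower-bound the required probability by the product of the per-box cycle probability raised to that count times the vacancy probability of the residual region. The only inessential difference is that you carry a crude $O(s^{10})$ volume-count bound on $\#(M_1\cup M_2\cup M_3)$, whereas the paper uses the stronger coordinate-wise separation to get the sharper $3\lceil s\delta^{-1}\rceil$; both suffice since only a finite deterministic bound uniform over $\varphi\in A_s$ is needed.
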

\begin{proof}
Assume that $X^{(1)}\cap Q^\M_{3s}(o)\in A_s$ and let $M_1,M_2,M_3\subset Q_s(o)$ be as in the definition of the event $A^\p_s$. \Fm, put $\delta=s^{-4}$ and $M_{1,2,3}=M_1\cup M_2\cup M_3$. We conclude from $\delta$-separatedness that \fa $x_1,x_2\in M_{1,2,3}$ with $x_1\ne x_2$ \wh $Q_{\delta/16}(x_1)\cap Q_{\delta/16}(x_2)=\es$. \Ip, for every $(\xi,v)\in M_{1,2,3}$ the event $X^{(2)}\in A^{*}_{\xi,\delta/16}$ is independent of the family of events $X^{(2)}\in A^{*}_{\eta,\delta/16}$ for $(\eta,w)\in M_{1,2,3}$ with $\eta\ne \xi$. \Fm, from $\delta$-separatedness we also conclude $\#M_1+\#M_2 \le 3\lceil s\delta^{-1}\rceil$. Finally, note that $\P\(X^{(2)}\cap Q^\M_s(o)\subset (M_{1,2,3}\oplus Q_{\delta/16}(o))\times\M\)\ge \P\(X^{(2)}\cap Q^\M_s(o)=\es\)$. Hence, we may choose $p_s=\P\(X^{(2)}\cap Q^\M_s(o)=\es\)\P\big(X^{(2)}\in A^{*}_{o,\delta/16}\big(X^{(2)}_o\big)\big)^{3\lceil s\delta^{-1}\rceil+1}.$
\end{proof}

Finally, we verify condition (US). Note that if $(\varphi_1,\varphi_2)\in A^\p_s$, then $\varphi_2\ne\es$ is an immediate consequence of the definition of $A^\p_s$. Moreover, part (a) of the condition follows from Lemma~\ref{stabCycle}. Hence, it remains to verify part (b). This will be achieved in the following two results.
As a first step, we provide a precise description of the combinatorial descendant function $\hco(\varphi_1\cup\varphi_2\cup\psi,\cdot)$ under the additional assumption that $\enviIn_z(\varphi_{1}\cup\psi)=\enviIn_z (\varphi_{1})$ and $\enviOut_z(\varphi_{1}\cup\psi)=\enviOut_z(\varphi_{1})$ for all $z\in\Z^d$.
\begin{lemma}
\label{usLem1}
Let $z_0\in\Z^2$, $\varphi_1,\varphi_2\in \N^\p$ be \st $\varphi_2\subset Q^\M_s(sz_0)$ and  $((\varphi_1-sz_0)\cap Q_{3s}^\M(o),\varphi_2-sz_0)\in A^\p_s$. Moreover, let $\psi\subset\R^{2,\M}\setminus Q^{\M}_{s}(sz_0)$ be a finite set \st for every $z\in\Z^2$ either $((\varphi_1-sz)\cap Q^\M_{3s}(o),(\psi-sz)\cap Q^\M_{s}(o))\in A^\p_s$ or $\psi\cap Q^\M_{s}(sz)=\es$. \Fm, assume that $\varphi_1\cup\psi^\p\in\N^\p$ \fa $\psi^\p\subset\varphi_2\cup\psi$ and also that $\enviIn_z (\varphi_{1}\cup \psi)=\enviIn_z (\varphi_{1})$ and $\enviOut_z( \varphi_{1}\cup\psi)=\enviOut_z (\varphi_{1})$ \fa $z\in\Z^2$.
Then, for every $x\in \varphi_1\cup\varphi_2\cup\psi$,
\begin{align*}
\hco(\varphi_1\cup\varphi_2\cup\psi,x)=
\begin{cases}
\hco(\varphi_2,x) &\text{if }x\in\varphi_2,\\
\hco(\varphi_1\cup\varphi_2,x)&\text{if }\hge(\varphi_{1}\cup\psi,x)\in Q_s(sz_0),\\
\hco(\varphi_{1}\cup\psi,x) &\text{otherwise.}
\end{cases}
\end{align*}
\end{lemma}
\begin{proof}
Without loss of generality, we may assume that $z_0=o$. Define a function $\hco^\p: \R^{2,\M}\to \R^{2,\M}$ by the right hand side of the asserted identity in the statement of the lemma. We show that $\hco^\p$ satisfies the characteristic properties of the lilypond model on $\varphi_1\cup\varphi_2\cup\psi$ and therefore coincides with $\hco(\varphi_{1}\cup\varphi_2\cup\psi,\cdot)$. The geometric descendant function corresponding to $\hco^\p$ is denoted by $\hge^\p$. 
First, we note that by definition of $A^\p_s$, the hard-core property and the existence of stopping neighbors is clearly satisfied for every $x\in \varphi_2$. Next, we claim that for every $x\in\varphi_{1}\cup\psi$,
\begin{align}
\label{shorterHC}
|\xi - \hge^\p(x)|\le |\xi -\hge(\varphi_{1}\cup\psi,x)|. 
\end{align}
This will imply the hard-core property. To prove~\eqref{shorterHC} it suffices to consider the case where $\hge(\varphi_{1}\cup\psi,x)\in Q_s(o)$.  Assume the contrary, i.e., that $|\xi - \hge^\p(x)|>|\xi -\hge\(\varphi_{1}\cup\psi,x\) |$. Then, by properties $\enviIn_o(\varphi_{1}\cup\psi)=\enviIn_o(\varphi_{1})$, $\enviOut_o(\varphi_{1}\cup\psi)=\enviOut_o(\varphi_{1})$ and the definition of $A_s^\p$, this would imply that $(\eta,w)=\hco(\varphi_1\cup\psi,x)\in \R^{2,\M}\setminus Q^\M_{3s}(o)$, 
contradicting 
$|\eta-\hge(\varphi_1\cup\psi,x)|<|\xi-\hge(\varphi_1\cup\psi,x)|.$

Next, we consider the issue of existence of stopping neighbors and put $y=\hco\(\varphi_{1}\cup\psi,x\)$. If $\hge\(\varphi_{1}\cup\psi,x\)\in Q_s(o)$, then this follows again from the properties $\enviIn_o \(\varphi_{1}\cup\psi\)=\enviIn_o \(\varphi_{1}\)$ and $\enviOut_o \(\varphi_{1}\cup\psi\)=\enviOut_o\( \varphi_{1}\)$. If $x\in \enviOut_o\(\varphi_{1}\cup\psi\)$, then an elementary geometric argument shows that $\hge(\varphi_{1}\cup\psi,y)\not\in Q_s(o)$, \sot $\hge(\varphi_{1}\cup\psi,x)\in I(\varphi_{1}\cup\psi,y)$. It remains to consider the case, where $x\in \R^{2,\M}\setminus Q^\M_s(o)$, but $x\not\in \enviIn_o\(\varphi_{1}\cup\psi\)$. Then, $y$ is clearly a stopping neighbor of $x$ with respect to $\hco^\p$ if $\hge(\varphi_{1}\cup\psi,y)\not\in Q_s(o)$.
\Fm, the case $y\in\(\varphi_{1}\cup\psi\)\cap Q^\M_s(o)\setminus \enviOut_o \(\varphi_{1}\cup\psi\)$ is not possible, as it would imply $\hge(\varphi_1\cup\xi,x)\in Q_s(o)$. Finally, consider the case $y\in\enviIn_o \(\varphi_{1}\cup\psi\)$ and denote by $P_y$ the first intersection point of $I(\varphi_1\cup\psi,y)$ and $Q_s(o)$. Then, the claim follows from the observation $\hge(\varphi_{1}\cup\xi,x)\in [\eta,P_y)$.
\end{proof}

Using Lemma~\ref{usLem1}, we can now complete the verification of condition (US).

\begin{lemma}
\label{usLem2}
Let $z_0\in\Z^2$, $\varphi_1,\varphi_2\in \N^\p$ be \st $\varphi_2\subset Q^\M_s(sz_0)$ and  $((\varphi_1-sz_0)\cap Q_{3s}^\M(o),\varphi_2-sz_0)\in A^\p_s$. Moreover, let $\psi\subset\R^{2,\M}\setminus Q^{\M}_{s}(sz_0)$ be a finite set \st for every $z\in\Z^2$ either $((\varphi_1-sz)\cap Q^\M_{3s}(o),(\psi-sz)\cap Q^\M_{s}(o))\in A^\p_s$ or $\psi\cap Q^\M_{s}(sz)=\es$. \Fm, assume that $\varphi_1\cup\psi^\p\in\N^\p$ \fa $\psi^\p\subset\varphi_2\cup\psi$. Then, for every $z\in\Z^2$, 
\begin{enumerate}
\item $\enviIn_z(\varphi_1\cup\varphi_2\cup\psi)=\enviIn_z(\varphi_{1})$.
\item $\enviOut_z(\varphi_1\cup\varphi_2\cup\psi)=\enviOut_z(\varphi_{1})$.
\item Let $x\in \varphi_1$. Then, either $\hco(\varphi_{1}\cup\varphi_2\cup\psi,x)\in\varphi_2$
or
\par\hspace*{-\leftmargin}\parbox{\textwidth}{$$\hco(\varphi_{1}\cup\varphi_2\cup\psi,x)=\hco(\varphi_1\cup \psi,x)\text{ and }x\not\in \enviIn_{z_0}(\varphi_1\cup\psi).$$}
\end{enumerate}
\end{lemma}
\begin{proof}
The proof is obtained by using induction on the number of squares of the form $Q_s(sz)$ that admit a non-empty intersection with $\psi$. If $\psi=\es$, then the conditions of Lemma~\ref{usLem1} are satisfied and we can use the description of $\hco(\varphi_1\cup\varphi_2,\cdot)$ given there. In order to verify items $1$ and $2$ suppose that $x\in\varphi_1\cup\varphi_2$ is contained in the symmetric difference of $\enviIn_z(\varphi_1\cup\varphi_2)$ and $\enviIn_z(\varphi_{1})$ or in the symmetric difference of $\enviOut_z(\varphi_1\cup\varphi_2)$ and $\enviOut_z(\varphi_{1})$. By the representation of $\hco$ in Lemma~\ref{usLem1}, this can only happen if $\hge(\varphi_{1},x)\in Q_s(sz_0)$. But in this case, the definition of the event $A_s^\p$ guarantees that also $\hge(\varphi_{1}\cup\varphi_2,x)\in Q_s(sz_0)$, \sot $x$ cannot lie in either of the symmetric differences. Concerning item $3$ if $\hco(\varphi_{1}\cup\varphi_2,x)\not\in\varphi_2$, then we are in the third case of the representation in Lemma~\ref{usLem1}, and the assertion follows.

Next, we proceed with the induction step and decompose $\psi$ as $\psi=\psi^{(1)}\cup\psi^{(2)}$, where $\es\ne\psi^{(1)}\subset Q^\M_s(sz^\p)$ and $\psi^{(2)}\subset\R^{2,\M}\setminus Q^\M_s(sz^\p)$ for some $z^\p\in \Z^2$. Applying the induction hypothesis with $\psi^{(2)}$ instead of $\psi$ and $\psi^{(1)}$ instead of $\varphi_2$, we see that that $\enviIn_z(\varphi_{1}\cup \psi)=\enviIn_z(\varphi_{1})$ and $\enviOut_z( \varphi_{1}\cup\psi)=\enviOut_z (\varphi_{1})$ \fa $z\in\Z^2$. Hence, we may again use the description of $\hco(\varphi_1\cup\varphi_2,\cdot)$ provided in Lemma~\ref{usLem1}. Items $1$-$3$ can now be checked using similar arguments as in the case $\psi=\es$, but for the convenience of the reader, we give some details. Concerning items $1$ and $2$ suppose that $x\in\varphi_1\cup\varphi_2\cup\psi$ is contained in the symmetric difference of $\enviIn_z(\varphi_1\cup\varphi_2\cup\psi)$ and $\enviIn_z(\varphi_{1})$ or in the symmetric difference of $\enviOut_z(\varphi_1\cup\varphi_2\cup\psi)$ and $\enviOut_z(\varphi_{1})$. This is only possible if $x\in\varphi_1\cup\psi$. Like in the case $\psi=\es$, we can exclude the option $\hge(\varphi_{1}\cup\psi,x)\in Q_s(sz_0)$. Finally, in the remaining case, we have $\hco(\varphi_1\cup\varphi_2\cup\psi,x)=\hco(\varphi_1\cup\psi,x)$, and we may use the induction hypothesis to conclude that $x$ cannot be contained in either of the symmetric differences. The third item can now be verified using precisely the same argumentation as in the case $\psi=\es$.
\end{proof}
\begin{proof}[Proof of Proposition~\ref{stopCond}]
It just remains to observe that part (a) of condition (US) follows from Lemma~\ref{stabCycle}, whereas part (b) follows from Lemma~\ref{usLem2}.
\end{proof}

\section{Possible extensions}
\label{extSec}
The present section concludes the paper by discussing possible extensions of the sprinkling approach to other Poisson-based directed random geometric graphs of out-degree at most $1$. The aim of the organization of the proof for the absence of forward percolation (Theorem~{\hyperref[part2]{\ref*{mainProp}.\ref*{part1}}}) was to highlight that the arguments depend on the specific model only via three crucial properties: continuity, the shielding condition (SH) and the uniform stopping condition (US). Additionally, to simplify the presentation, we used sometimes that the geometric descendant of $x\in X$ lies on the line segment connecting $\hco(X,x)$ and $\hge(X,\hco(X,x))$, but removing this condition for a specific example should only be a minor issue.

Apart from the anisotropic lilypond model that we have discussed in detail, another example to which the sprinkling technique applies is given by the directed random geometric graph on a homogeneous Poisson point process, where for some fixed $k\ge1$ for each point a descendant is chosen among the $k$ nearest neighbors according to some distribution. In fact, the verification of the crucial conditions for this example is far less involved than in the lilypond setting.

Moreover, it would be interesting to extend the sprinkling technique to further models of lilypond type. The common characteristic of the lilypond model considered in this paper and the lilypond model introduced in~\cite{lilypond6} is an asymmetry in the growth-stopping protocol. When two line segments hit only one of the two ceases its growth. In both models this asymmetry prevents one from using the classical argumentation for proving absence of percolation, which is based on the absence of suitable descending chains. Although the sprinkling approach seems to be sufficiently strong to deal also with the example considered in~\cite{lilypond6},  there are two important differences that make the verification of conditions (SH) and (US) considerably more involved. First, the latter model is isotropic, \sot the shielding property now has to prevent trespassings in all directions simultaneously. Second, it features two-sided growth \sot the sprinkling has to stop line segments entering a square at roughly the same point in time as before in order to ensure that the configuration of the lilypond model outside the square remains largely unchanged. D.~J.~Daley also proposed to investigate a higher-dimensional analog in $\R^d$, where the two-sided line segments are replaced by the intersection of balls with isotropic codimension $1$ hyperplanes. Since the sprinkling approach is a priori not restricted to the planar setting, it would be very interesting to investigate whether it is also applicable for proving the absence of percolation in these higher-dimensional lilypond models.

\subsection*{Acknowledgments}
The author is grateful for the detailed reports by the anonymous referees that helped to substantially improve the quality of earlier versions of the manuscript and, in particular, for correcting an error in condition (SH). The author thanks D.~J.~Daley for proposing the percolation problem concerning the line-segment model and the generalization to percolation in directed graphs with out-degree $1$. The author also thanks S.~Ziesche and G.~Bonnet for interesting discussions and useful remarks on earlier versions of the manuscript. This work has been supported by a research grant from DFG Research Training Group 1100 at Ulm University.

\bibliography{template}
\bibliographystyle{abbrv}
\end{document}